\pgfplotsset{compat=newest}
\newcolumntype{C}{>{$}c<{$}} 
\newcommand\wline[1]{\noalign{\hrule height #1}}
\newcommand{\al}{\alpha}
\newcommand{\w}{\omega}
\newcommand{\eps}{\varepsilon}
\newcommand{\kk}{\kappa}
\newcommand{\vp}{\varphi}
\newcommand{\Z}{\mathbb{Z}}
\newcommand{\N}{\mathbb{N}}
\newcommand{\R}{\mathbb{R}}
\newcommand{\C}{\mathbb{C}}
\newcommand{\LL}{\mathcal{L}}
\newcommand{\FF}{\mathcal{F}}
\newcommand{\p}{\partial}
\newcommand{\dx}{{d}}
\newcommand{\nm}{\noalign{\smallskip}}
\newcommand{\ds}{\displaystyle}
\newcommand{\iu}{\mathrm{i}\mkern1mu}
\newtheorem{thm}{Theorem}[section]
\newtheorem{cor}[thm]{Corollary}
\newtheorem{lemma}[thm]{Lemma}
\newtheorem{prop}[thm]{Proposition}
\theoremstyle{definition}
\newtheorem{definition}{Definition}[section]
\newtheorem{rem}[definition]{Remark}
\title{Transmission properties of space-time modulated metamaterials\thanks{\footnotesize
This work was supported in part by the Swiss National Science Foundation grant number
200021--200307.}}
\author{Habib Ammari\thanks{\footnotesize Department of Mathematics, 
		ETH Z\"urich, 
		R\"amistrasse 101, CH-8092 Z\"urich, Switzerland (habib.ammari@math.ethz.ch, jinghao.cao@sam.math.ethz.ch, xinzeng@student.ethz.ch).} \and Jinghao Cao\footnotemark[2] \and Xinmeng Zeng\footnotemark[2] }
\date{}
\begin{document}
\maketitle
\begin{abstract}
	We prove the possibility of achieving exponentially growing wave propagation in space-time modulated media and give an asymptotic analysis of the quasifrequencies in terms of the amplitude of the time modulation at the degenerate points of the folded band structure. Our analysis provides the first proof of existence of k-gaps in the band structures of space-time modulated systems of subwavelength resonators. 
	\end{abstract}
\noindent{\textbf{Mathematics Subject Classification (MSC2000):} 35J05, 35C20, 35P20, 74J20}
\vspace{0.2cm}
\\ \noindent{\textbf{Keywords:}} wave manipulation at subwavelength scales, unidirectional wave, subwavelength quasifrequency, space-time modulated medium, metamaterial, non-reciprocal band gaps, k-gaps
	\vspace{0.5cm}	
\section{Introduction}
The ability to control wave propagation is fundamental in many areas of physics. In particular, systems with subwavelength structures have attracted considerable attention over the past decades \cite{lemoult2016soda,yves2017crytalline,
phononic1,phononic2}. The word \textit{subwavelength} designates systems that are able to strongly scatter waves with comparatively large wavelengths. The building blocks of such systems which exhibit subwavelength resonance are called \textit{subwavelength resonators}. Arranged in repeating patterns inside a medium with highly different material parameters, the subwavelength resonators together with the background medium can form microstructures that exhibit various new properties that the base materials do not possess. Such structures are examples of \textit{metamaterials}: materials with repeating microstructures that possess additional properties derived from the newly designed structures. These properties typically depend on the shape, geometry, and arrangement of the unit cells. Artificially structured metamaterials have applications in e.g. optics, nanophotonics and acoustics, and have been studied in detail for their subwavelength characteristics \cite{ammari2021functional,review,review2, metamaterials2013}.

As reviewed in \cite{ammari2021functional}, \emph{high-contrast} resonators are a natural choice of resonators when designing subwavelength metamaterials. In fact, the high contrast material parameters are crucial to their subwavelength properties. Such structures can be used to achieve a variety of effects. These include superresolution and superfocusing, double negative material properties and robust guiding properties at subwavelength scales \cite{davies2019fully, ammari2018minnaert,ammari2020exceptional,ammari2020highfrequency,ammari2020highorder,ammari2017subwavelength,ammari2017double,ammari2020honeycomb,ammari2020subwavelength,ammari2020robust,ammari2021bound,ammari2020topological}. Inspiration for such resonators came from the resonance of air bubbles in water, as observed by Marcel Minnaert in 1933 \cite{minnaert1933musical}. They are known to possess a subwavelength resonance called the Minnaert resonance \cite{ammari2018minnaert}. Other examples of subwavelength resonators are Helmholtz resonators, plasmonic nanoparticles, and high-dielectric nanoparticles; see, for instance, \cite{ammari2015superresolution,ammari2017plasmonicscalar,
ammari2016plasmonicMaxwell,ammari2020maxwell,pierre1,pierre2,bryn,hyeonbae,hongyu,john}.

The static case with periodic spatial systems of subwavelength resonators has been studied with the help of Floquet-Bloch theory. In such structures, wave momentums are defined modulo the dual lattice and are contained within the Brillouin zone. The so-called \textit{band structure} of the material describes the frequency-to-momentum relationship of waves inside the material. It has been shown in \cite{ammari2017subwavelength} that there exists a subwavelength band gap, i.e., a gap between the band functions where waves with subwavelength frequencies inside the band gap cannot propagate through the material and will decay exponentially.

Inspired by the periodic space structure of the system of resonators, it is natural to consider modulations of the material parameters that are periodic in time. Many intriguing phenomena arise from such modulations \cite{koutserimpas2020electromagnetic, nassar2018quantization}. For instance, time modulation provides a way to break reciprocity, leading to non-reciprocal transmission properties \cite{reviewTM1,reviewTM2,reviewTM3}. The mathematical foundation for studying such systems has been recently developed in \cite{ammari2020time, jinghaothesis,TheaThesis,paper2}.
In particular, periodic lattices of acoustic subwavelength resonators have been studied in \cite{jinghaothesis}. It has been shown that temporally modulating the density parameter breaks the time-reversal symmetry and leads to unidirectional excitation of the operating waves. In fact, time modulation of the density parameter turns degenerate points of the folded band structure into non-symmetric band gaps.
The degenerate points are obtained by folding of the band functions of the unmodulated structure.

In this paper, we study time dependent periodic systems in the context of high-contrast acoustic subwavelength metamaterials. Namely, we are interested in the case where time modulation is applied uniformly inside each resonator with a possible phase shift. We consider both time modulations in the density and  the bulk modulus of the acoustic subwavelength resonators. Using the capacitance matrix formulation, the initial wave equation can be viewed, up to first-order in the density contrast parameter, as a system of Hill's equations. We study the solutions to this system  using methods similar to perturbation theory in quantum mechanics, and we investigate the cases where one or both of the density  and the bulk modulus are modulated. Through an asymptotic analysis of the quasifrequencies in terms of the amplitude of the modulations that are obtained by perturbing degenerate points of the folded unmodulated band structure, we show that non-reciprocal transmission occurs only in the case where the density alone is modulated. Modulating the bulk modulus yields exponentially growing waves inside the structure, since the quasifrequencies at the degenerate points are purely imaginary. Around these degenerate points,  it can be shown that by time modulations the band functions open into band gaps in the momentum variable, known as \emph{$k$-gaps}  \cite{ammari2020time,koutserimpas2018electromagnetic,cassedy1967dispersion}. 
To the best of our knowledge, our analysis in this paper provides the first  
proof of existence of k-gaps in the band structures of space-time modulated systems of subwavelength resonators. 

This paper is organized as follows. In section \ref{sec00}, the problem is formulated mathematically. Moreover, we briefly review Floquet-Bloch theory which is an essential tool for solving ordinary differential equations (ODEs) with periodic coefficients. We also introduce the quasiperiodic capacitance matrix which provides a discrete approximation for computing the band functions of periodic systems of subwavelength resonators and recall the notion of subwavelength quasifrequencies. These are quasifrequencies with associated Bloch modes essentially supported in the low-frequency regime. They can be approximated as the quasifrequencies associated with solutions to a system of Hill's equations. 
In section \ref{sec2}, we first reduce such a system of Hill's equations to a first-order ODE and recall the Floquet decomposition of its fundamental solution.  
Then we compute the first-order terms in the asymptotic expansions of the quasifrequencies at degenerate points in terms of the modulation amplitude for a one-dimensional lattice and compare the cases where one or both material parameters are time modulated. Our results in this section are illustrated by a variety of numerical simulations. Finally, section \ref{sec4} is devoted to numerical illustrations of k-gap openings in  square and  honeycomb lattices. In the appendix, we recall a basic result in eigenvalue perturbation theory, which is reformulated to suit our setting. 

\section{Problem formulation and preliminary theory} \label{sec00}
\subsection{Resonator structure and wave equation}
\label{sec0}
Consider the wave equation in time modulated media. Such wave equations can be used to model not only acoustic waves but also polarized electromagnetic waves.
The time dependent material parameters are given by $\rho(x,t)$ and $\kappa(x,t)$. In acoustics, $\rho$ and $\kappa$ represent the density and the bulk modulus of the material.

We will study the time dependent wave equation in dimension $d=2$ or $3$:
\begin{equation}
\label{waveequation}
\left(\frac{\partial}{\partial t }\frac{1}{\kappa(x,t)}\frac{\partial}{\partial t}-\nabla\cdot\frac{1}{\rho(x,t)}\nabla\right)u(x,t)=0,\ \ x\in \mathbb{R}^d,t\in\mathbb{R}.	
\end{equation}
We assume that the metamaterial is periodic with lattice $\Lambda \subset \R^d$ and unit cell $Y\subset \R^d$. Each unit cell contains a system of $N$ resonators $D\Subset Y$. $D$ is constituted by $N$ disjoint domains $D_i$. For $i=1,\ldots,N$, each $D_i$ is connected and has a boundary of Hölder class: $\p D_i \in C^{1,s}, 0 < s < 1$. Let $\mathcal{C}_i$ denote the periodically repeated $i^{\text{th}}$ resonators and $\mathcal{C}$ the full crystal:
\begin{equation}
	\mathcal{C}_i=\bigcup_{m\in \Lambda}D_i+m,\ \ \mathcal{C}=\bigcup_{m\in\Lambda}D+m.
\end{equation} 	
Let $\Lambda^*$ be the dual lattice and define the (space-) Brillouin zone $Y^*$ as the torus $Y^*:= \mathbb{R}^d/\Lambda^*$. Recall that if $\Lambda\subset \R^d$ is the lattice generated by the vectors $l_1,\ldots,l_d$, then the dual lattice $\Lambda^*$ is generated by the vectors $2\pi l_i^*$, where $l_i^*$ is the dual vector of $l_i$ with $l_i\cdot l_j^*=\delta_{ij}$; see Figure \ref{fig:1}.

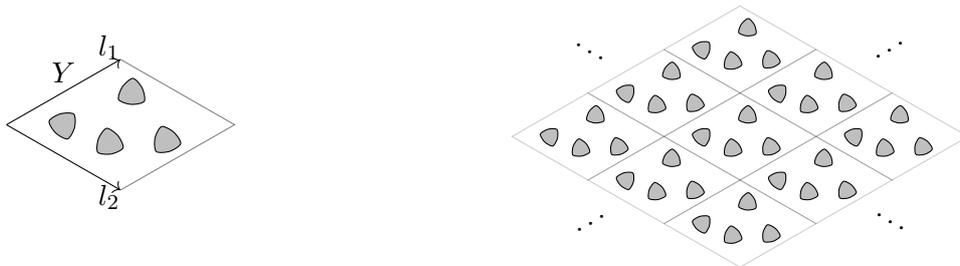
\begin{figure}[H]
		\begin{subfigure}[b]{0.4\linewidth}
			\centering
			\begin{tikzpicture}[scale=1.5]
				\begin{scope}[scale=1]
					\pgfmathsetmacro{\rb}{0.13pt}
					\pgfmathsetmacro{\rs}{0.1pt}
					\pgfmathsetmacro{\ao}{326}
					\pgfmathsetmacro{\at}{46}
					\pgfmathsetmacro{\ad}{228}
					\pgfmathsetmacro{\af}{100}
					\coordinate (a) at (1,{1/sqrt(3)});		
					\coordinate (b) at (1,{-1/sqrt(3)});	
					\coordinate (c) at (2,0);
					\draw[->] (0,0) -- (a) node[pos=0.9,xshift=0,yshift=7]{ $l_1$} node[pos=0.5,above]{$Y$};
					\draw[->] (0,0) -- (b) node[pos=0.9,xshift=0,yshift=-5]{ $l_2$};
					\draw[opacity=0.5] (a) -- (c) -- (b);
					\begin{scope}[xshift = 1.1cm, yshift=0.28cm,rotate=\ao]
						\draw[fill=lightgray] plot [smooth cycle] coordinates {(0:\rb) (60:\rs) (120:\rb) (180:\rs) (240:\rb) (300:\rs) };
					\end{scope}
					\begin{scope}[xshift = 0.5cm, rotate=\at]
						\draw[fill=lightgray] plot [smooth cycle] coordinates {(0:\rb) (60:\rs) (120:\rb) (180:\rs) (240:\rb) (300:\rs) };
					\end{scope}
					\begin{scope}[xshift = 1.4cm, yshift=-0.14cm, rotate=\ad]
						\draw[fill=lightgray] plot [smooth cycle] coordinates {(0:\rb) (60:\rs) (120:\rb) (180:\rs) (240:\rb) (300:\rs) };
					\end{scope}
					\begin{scope}[xshift = 0.9cm,yshift=-0.16cm, rotate=\af]
						\draw[fill=lightgray] plot [smooth cycle] coordinates {(0:\rb) (60:\rs) (120:\rb) (180:\rs) (240:\rb) (300:\rs) };
					\end{scope}
				\end{scope}
			\end{tikzpicture}
			\vspace{0.65cm}
			\caption{Unit cell $Y$ containing $N=4$ resonators.}
		\end{subfigure}
		\begin{subfigure}[b]{0.6\linewidth}
		\centering
			\begin{tikzpicture}[scale=1]
				\begin{scope}[xshift=-5cm,scale=1]
					\coordinate (a) at (1,{1/sqrt(3)});		
					\coordinate (b) at (1,{-1/sqrt(3)});	
					\coordinate (c) at (2,0);
					\pgfmathsetmacro{\rb}{0.13pt}
					\pgfmathsetmacro{\rs}{0.1pt}
					\pgfmathsetmacro{\ao}{326}
					\pgfmathsetmacro{\at}{46}
					\pgfmathsetmacro{\ad}{228}
					\pgfmathsetmacro{\af}{100}	
					\draw[opacity=0.2] (0,0) -- (a);
					\draw[opacity=0.2] (0,0) -- (b);
					\draw[opacity=0.2] (a) -- (c) -- (b);
					\begin{scope}[xshift = 1.1cm, yshift=0.28cm,rotate=\ao]
						\draw[fill=lightgray] plot [smooth cycle] coordinates {(0:\rb) (60:\rs) (120:\rb) (180:\rs) (240:\rb) (300:\rs) };
				\end{scope}
					
				\begin{scope}[xshift = 0.5cm, rotate=\at]
				
						\draw[fill=lightgray] plot [smooth cycle] coordinates {(0:\rb) (60:\rs) (120:\rb) (180:\rs) (240:\rb) (300:\rs) };
					\end{scope}
					\begin{scope}[xshift = 1.4cm, yshift=-0.14cm, rotate=\ad]
						\draw[fill=lightgray] plot [smooth cycle] coordinates {(0:\rb) (60:\rs) (120:\rb) (180:\rs) (240:\rb) (300:\rs) };
					\end{scope}
					\begin{scope}[xshift = 0.9cm,yshift=-0.16cm, rotate=\af]
						\draw[fill=lightgray] plot [smooth cycle] coordinates {(0:\rb) (60:\rs) (120:\rb) (180:\rs) (240:\rb) (300:\rs) };
					\end{scope}			
					\begin{scope}[shift = (a)]
						\draw[opacity = 0.2] (0,0) -- (1,{1/sqrt(3)}) -- (2,0) -- (1,{-1/sqrt(3)}) -- cycle; 
						\begin{scope}[xshift = 1.1cm, yshift=0.28cm,rotate=\ao]
							\draw[fill=lightgray] plot [smooth cycle] coordinates {(0:\rb) (60:\rs) (120:\rb) (180:\rs) (240:\rb) (300:\rs) };
						\end{scope}
						\begin{scope}[xshift = 0.5cm, rotate=\at]
							\draw[fill=lightgray] plot [smooth cycle] coordinates {(0:\rb) (60:\rs) (120:\rb) (180:\rs) (240:\rb) (300:\rs) };
						\end{scope}
						\begin{scope}[xshift = 1.4cm, yshift=-0.14cm, rotate=\ad]
							\draw[fill=lightgray] plot [smooth cycle] coordinates {(0:\rb) (60:\rs) (120:\rb) (180:\rs) (240:\rb) (300:\rs) };
						\end{scope}
						\begin{scope}[xshift = 0.9cm,yshift=-0.16cm, rotate=\af]
							\draw[fill=lightgray] plot [smooth cycle] coordinates {(0:\rb) (60:\rs) (120:\rb) (180:\rs) (240:\rb) (300:\rs) };
						\end{scope}
					\end{scope}
					\begin{scope}[shift = (b)]
						\draw[opacity = 0.2] (0,0) -- (1,{1/sqrt(3)}) -- (2,0) -- (1,{-1/sqrt(3)}) -- cycle; 
						\begin{scope}[xshift = 1.1cm, yshift=0.28cm,rotate=\ao]
							\draw[fill=lightgray] plot [smooth cycle] coordinates {(0:\rb) (60:\rs) (120:\rb) (180:\rs) (240:\rb) (300:\rs) };
						\end{scope}
						\begin{scope}[xshift = 0.5cm, rotate=\at]
							\draw[fill=lightgray] plot [smooth cycle] coordinates {(0:\rb) (60:\rs) (120:\rb) (180:\rs) (240:\rb) (300:\rs) };
						\end{scope}
						\begin{scope}[xshift = 1.4cm, yshift=-0.14cm, rotate=\ad]
							\draw[fill=lightgray] plot [smooth cycle] coordinates {(0:\rb) (60:\rs) (120:\rb) (180:\rs) (240:\rb) (300:\rs) };
						\end{scope}
						\begin{scope}[xshift = 0.9cm,yshift=-0.16cm, rotate=\af]
						\draw[fill=lightgray] plot [smooth cycle] coordinates {(0:\rb) (60:\rs) (120:\rb) (180:\rs) (240:\rb) (300:\rs) };
						\end{scope}
					\end{scope}
					\begin{scope}[shift = ($-1*(a)$)]
						\draw[opacity = 0.2] (0,0) -- (1,{1/sqrt(3)}) -- (2,0) -- (1,{-1/sqrt(3)}) -- cycle; 
						\begin{scope}[xshift = 1.1cm, yshift=0.28cm,rotate=\ao]
						\draw[fill=lightgray] plot [smooth cycle] coordinates {(0:\rb) (60:\rs) (120:\rb) (180:\rs) (240:\rb) (300:\rs) };
						\end{scope}
						\begin{scope}[xshift = 0.5cm, rotate=\at]
						\draw[fill=lightgray] plot [smooth cycle] coordinates {(0:\rb) (60:\rs) (120:\rb) (180:\rs) (240:\rb) (300:\rs) };
						\end{scope}
						\begin{scope}[xshift = 1.4cm, yshift=-0.14cm, rotate=\ad]
						\draw[fill=lightgray] plot [smooth cycle] coordinates {(0:\rb) (60:\rs) (120:\rb) (180:\rs) (240:\rb) (300:\rs) };
						\end{scope}
						\begin{scope}[xshift = 0.9cm,yshift=-0.16cm, rotate=\af]
						\draw[fill=lightgray] plot [smooth cycle] coordinates {(0:\rb) (60:\rs) (120:\rb) (180:\rs) (240:\rb) (300:\rs) };
						\end{scope}
					\end{scope}
					\begin{scope}[shift = ($-1*(b)$)]
						\draw[opacity = 0.2] (0,0) -- (1,{1/sqrt(3)}) -- (2,0) -- (1,{-1/sqrt(3)}) -- cycle; 
						\begin{scope}[xshift = 1.1cm, yshift=0.28cm,rotate=\ao]
						\draw[fill=lightgray] plot [smooth cycle] coordinates {(0:\rb) (60:\rs) (120:\rb) (180:\rs) (240:\rb) (300:\rs) };
						\end{scope}
						\begin{scope}[xshift = 0.5cm, rotate=\at]
						\draw[fill=lightgray] plot [smooth cycle] coordinates {(0:\rb) (60:\rs) (120:\rb) (180:\rs) (240:\rb) (300:\rs) };
						\end{scope}
						\begin{scope}[xshift = 1.4cm, yshift=-0.14cm, rotate=\ad]
						\draw[fill=lightgray] plot [smooth cycle] coordinates {(0:\rb) (60:\rs) (120:\rb) (180:\rs) (240:\rb) (300:\rs) };
						\end{scope}
						\begin{scope}[xshift = 0.9cm,yshift=-0.16cm, rotate=\af]
						\draw[fill=lightgray] plot [smooth cycle] coordinates {(0:\rb) (60:\rs) (120:\rb) (180:\rs) (240:\rb) (300:\rs) };
						\end{scope}
					\end{scope}
					\begin{scope}[shift = ($(a)+(b)$)]
						\draw[opacity = 0.2] (0,0) -- (1,{1/sqrt(3)}) -- (2,0) -- (1,{-1/sqrt(3)}) -- cycle; 
						\begin{scope}[xshift = 1.1cm, yshift=0.28cm,rotate=\ao]
						\draw[fill=lightgray] plot [smooth cycle] coordinates {(0:\rb) (60:\rs) (120:\rb) (180:\rs) (240:\rb) (300:\rs) };
						\end{scope}
						\begin{scope}[xshift = 0.5cm, rotate=\at]
						\draw[fill=lightgray] plot [smooth cycle] coordinates {(0:\rb) (60:\rs) (120:\rb) (180:\rs) (240:\rb) (300:\rs) };
						\end{scope}
						\begin{scope}[xshift = 1.4cm, yshift=-0.14cm, rotate=\ad]
							\draw[fill=lightgray] plot [smooth cycle] coordinates {(0:\rb) (60:\rs) (120:\rb) (180:\rs) (240:\rb) (300:\rs) };
						\end{scope}
						\begin{scope}[xshift = 0.9cm,yshift=-0.16cm, rotate=\af]
							\draw[fill=lightgray] plot [smooth cycle] coordinates {(0:\rb) (60:\rs) (120:\rb) (180:\rs) (240:\rb) (300:\rs) };
						\end{scope}
					\end{scope}
					\begin{scope}[shift = ($-1*(a)-(b)$)]
						\draw[opacity = 0.2] (0,0) -- (1,{1/sqrt(3)}) -- (2,0) -- (1,{-1/sqrt(3)}) -- cycle; 
						\begin{scope}[xshift = 1.1cm, yshift=0.28cm,rotate=\ao]
							\draw[fill=lightgray] plot [smooth cycle] coordinates {(0:\rb) (60:\rs) (120:\rb) (180:\rs) (240:\rb) (300:\rs) };
						\end{scope}
						\begin{scope}[xshift = 0.5cm, rotate=\at]
							\draw[fill=lightgray] plot [smooth cycle] coordinates {(0:\rb) (60:\rs) (120:\rb) (180:\rs) (240:\rb) (300:\rs) };
						\end{scope}
						\begin{scope}[xshift = 1.4cm, yshift=-0.14cm, rotate=\ad]
							\draw[fill=lightgray] plot [smooth cycle] coordinates {(0:\rb) (60:\rs) (120:\rb) (180:\rs) (240:\rb) (300:\rs) };
						\end{scope}
						\begin{scope}[xshift = 0.9cm,yshift=-0.16cm, rotate=\af]
							\draw[fill=lightgray] plot [smooth cycle] coordinates {(0:\rb) (60:\rs) (120:\rb) (180:\rs) (240:\rb) (300:\rs) };
						\end{scope}
					\end{scope}
					\begin{scope}[shift = ($(a)-(b)$)]
						\draw[opacity = 0.2] (0,0) -- (1,{1/sqrt(3)}) -- (2,0) -- (1,{-1/sqrt(3)}) -- cycle; 
						\begin{scope}[xshift = 1.1cm, yshift=0.28cm,rotate=\ao]
							\draw[fill=lightgray] plot [smooth cycle] coordinates {(0:\rb) (60:\rs) (120:\rb) (180:\rs) (240:\rb) (300:\rs) };
						\end{scope}
						\begin{scope}[xshift = 0.5cm, rotate=\at]
							\draw[fill=lightgray] plot [smooth cycle] coordinates {(0:\rb) (60:\rs) (120:\rb) (180:\rs) (240:\rb) (300:\rs) };
						\end{scope}
						\begin{scope}[xshift = 1.4cm, yshift=-0.14cm, rotate=\ad]
							\draw[fill=lightgray] plot [smooth cycle] coordinates {(0:\rb) (60:\rs) (120:\rb) (180:\rs) (240:\rb) (300:\rs) };
						\end{scope}
						\begin{scope}[xshift = 0.9cm,yshift=-0.16cm, rotate=\af]
							\draw[fill=lightgray] plot [smooth cycle] coordinates {(0:\rb) (60:\rs) (120:\rb) (180:\rs) (240:\rb) (300:\rs) };
						\end{scope}
					\end{scope}
					\begin{scope}[shift = ($-1*(a)+(b)$)]
						\draw[opacity = 0.2] (0,0) -- (1,{1/sqrt(3)}) -- (2,0) -- (1,{-1/sqrt(3)}) -- cycle; 
						\begin{scope}[xshift = 1.1cm, yshift=0.28cm,rotate=\ao]
							\draw[fill=lightgray] plot [smooth cycle] coordinates {(0:\rb) (60:\rs) (120:\rb) (180:\rs) (240:\rb) (300:\rs) };
						\end{scope}
						\begin{scope}[xshift = 0.5cm, rotate=\at]
							\draw[fill=lightgray] plot [smooth cycle] coordinates {(0:\rb) (60:\rs) (120:\rb) (180:\rs) (240:\rb) (300:\rs) };
						\end{scope}
						\begin{scope}[xshift = 1.4cm, yshift=-0.14cm, rotate=\ad]
							\draw[fill=lightgray] plot [smooth cycle] coordinates {(0:\rb) (60:\rs) (120:\rb) (180:\rs) (240:\rb) (300:\rs) };
						\end{scope}
						\begin{scope}[xshift = 0.9cm,yshift=-0.16cm, rotate=\af]
							\draw[fill=lightgray] plot [smooth cycle] coordinates {(0:\rb) (60:\rs) (120:\rb) (180:\rs) (240:\rb) (300:\rs) };
						\end{scope}
					\end{scope}
					\begin{scope}[shift = ($2*(a)$)]
						\draw (1,0) node[rotate=30]{$\cdots$};
					\end{scope}
					\begin{scope}[shift = ($-2*(a)$)]
						\draw (1,0) node[rotate=210]{$\cdots$};
					\end{scope}
					\begin{scope}[shift = ($2*(b)$)]
						\draw (1,0) node[rotate=-30]{$\cdots$};
					\end{scope}
					\begin{scope}[shift = ($-2*(b)$)]
						\draw (1,0) node[rotate=150]{$\cdots$};
					\end{scope}
				\end{scope}
			\end{tikzpicture}
			\caption{Infinite, periodic system with unit cell $Y$ and 
			lattice $\Lambda$.}
			\label{fig:1}
		\end{subfigure}
		\caption{Illustrations of the unit cell and the periodic system of resonators.} 
\end{figure}
\FloatBarrier

\subsection{Floquet-Bloch theory}
\label{sec1}
\label{Floquettheory}
Floquet theory is widely used for solving differential equations with periodic coefficients. Here, we are interested in solving differential equations of the form \begin{equation}
\label{ode}
\frac{dx}{dt}=A(t)x,\end{equation} 
where $A(t):[0,\infty)\to\C$ is a $T$-periodic $n\times n$ complex matrix function of class $\mathcal{C}^0$.
Recall that the fundamental matrix of (\ref{ode}) is a $n\times n$ matrix with linear independent column vectors, which are solutions to (\ref{ode}). Then the following classical theorem gives us a fundamental result central to our analysis.

\begin{thm}[Floquet's theorem]
	 Let $A(t)$ be a $T$-periodic $n\times n$ complex matrix. Denote by $X(t)$ the fundamental matrix to (\ref{ode}): 
    \begin{equation}
	\label{odex}
	\begin{cases}
		\frac{dX}{dt} =A(t)X,\\
		X(0)=\mathrm{Id}_n,
	\end{cases}
	\end{equation}
	where $\mathrm{Id}_n$ denotes the $n\times n$ identity matrix.
	Then there exists a constant matrix $F$ and a $T$-periodic matrix function $P(t)$ such that
	\begin{equation}
	\label{floquetthm}
	X(t)= P(t) e^{Ft}.
	\end{equation}
	In particular, $P(0)=\mathrm{Id}_n$ and $X(T)=e^{FT}$.
\end{thm}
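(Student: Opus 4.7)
The plan is to exploit the fact that the translate $X(t+T)$ solves the same matrix ODE as $X(t)$, and then to extract the desired decomposition via a matrix logarithm of the monodromy matrix $X(T)$.

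First I would verify that $Y(t) := X(t+T)$ is itself a fundamental matrix solution of \eqref{odex} (without the initial condition). This follows from a direct computation: $\tfrac{d}{dt}Y(t) = \tfrac{d}{dt}X(t+T) = A(t+T)X(t+T) = A(t)Y(t)$, where the last equality uses the $T$-periodicity of $A$. Since the columns of $X(t)$ form a basis of the solution space to \eqref{ode}, uniqueness of solutions to linear ODEs implies that there is a constant invertible matrix $C$ with $Y(t) = X(t)C$ for all $t$. Evaluating at $t=0$ yields $C = X(T)$, so $X(t+T) = X(t)X(T)$.

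Next, I would invoke the existence of a complex matrix logarithm for the invertible matrix $X(T)$: since $X(T)$ is invertible (as the fundamental matrix is nonsingular for all $t$, by Liouville's formula applied to its determinant), there exists a constant matrix $F$ with $e^{FT} = X(T)$. This step, usually proved by passing to the Jordan normal form and taking the logarithm of each Jordan block, is the key non-elementary ingredient; it is also the place where one has to accept that $F$ may be complex even when $A(t)$ is real, which is the main subtlety.

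Having obtained $F$, I would define $P(t) := X(t)e^{-Ft}$ and check the two required properties. Periodicity: using $X(t+T) = X(t)X(T) = X(t)e^{FT}$ and the commutativity of $e^{-Ft}$ with $e^{-FT}$, we get
\begin{equation*}
P(t+T) = X(t+T)e^{-F(t+T)} = X(t)e^{FT}e^{-FT}e^{-Ft} = X(t)e^{-Ft} = P(t).
\end{equation*}
Continuity of $P$ in $t$ is inherited from $X$ and from the smooth dependence of $e^{-Ft}$ on $t$. The normalizations $P(0) = X(0)e^{0} = \mathrm{Id}_n$ and $X(T) = e^{FT}$ follow directly from the construction, completing the proof. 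The only genuine obstacle is the existence of the matrix logarithm; everything else is bookkeeping with the group property of the exponential and the periodicity of $A$.
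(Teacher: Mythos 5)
Your proposal is correct and follows essentially the same route as the paper: show that $X(t+T)$ solves the same equation, deduce $X(t+T)=X(t)X(T)$, take a matrix logarithm $X(T)=e^{FT}$, and set $P(t):=X(t)e^{-Ft}$. The extra details you supply (invertibility via Liouville's formula, the Jordan-form construction of the logarithm) only flesh out steps the paper takes for granted.
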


\begin{proof}
Let $X(t)$ be the fundamental matrix satisfying (\ref{ode}). Then $X(t+T)$ is also a fundamental matrix since $\frac{d}{dt}X(t+T)=A(t+T)X(t+T)=A(t)X(t+T)$. Hence, there exists an invertible matrix $C=X(T)$ such that $X(t+T)=X(t)C$. By the existence of matrix logarithm we can write $C=e^{FT}$. Let $$P(t):=X(t)e^{-Ft}.$$ Then $P(t+T)=X(t)Ce^{-FT}e^{-Ft}=X(t)e^{-Ft}=P(t)$.
\end{proof}

\begin{definition}
A function $x(t)$ is $\omega$-quasiperiodic with period $T$ if $x(t+T)=e^{iwT}x(t)$.
\end{definition}

\begin{prop}
With the same setting as in (\ref{floquetthm}), for each eigenvalue $f:=i\omega$ of $F$, there exists an $\omega$-quasiperiodic solution $x(t)$ to (\ref{ode}). 
\end{prop}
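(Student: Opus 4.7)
The plan is to construct the desired $\omega$-quasiperiodic solution explicitly from Floquet's theorem by selecting a column of $X(t)$ aligned with an eigenvector of $F$.

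First, I would fix an eigenvalue $f = i\omega$ of $F$ and an associated eigenvector $v \in \C^n \setminus \{0\}$, so that $Fv = fv$. By the power-series definition of the matrix exponential, this propagates to $e^{Ft}v = e^{ft}v = e^{i\omega t}v$ for all $t$. Then I would define the candidate solution
\begin{equation*}
x(t) := X(t)\, v = P(t)\, e^{Ft}\, v = e^{i\omega t}\, P(t)\, v,
\end{equation*}
using the Floquet decomposition (\ref{floquetthm}).

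Next I would verify the two required properties. Because $X(t)$ satisfies (\ref{odex}) and $v$ is a constant vector, $x'(t) = X'(t) v = A(t) X(t) v = A(t) x(t)$, so $x$ solves (\ref{ode}). Using the $T$-periodicity of $P$,
\begin{equation*}
x(t+T) = e^{i\omega (t+T)} P(t+T) v = e^{i\omega T}\, e^{i\omega t}\, P(t) v = e^{i\omega T} x(t),
\end{equation*}
which is exactly the $\omega$-quasiperiodicity condition from the preceding definition. Nontriviality follows from $x(0) = P(0)v = v \neq 0$ since $P(0) = \mathrm{Id}_n$.

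There is no real obstacle here — the only subtle point is making sure that $e^{Ft}$ applied to an eigenvector of $F$ produces the scalar factor $e^{ft}$, which is a one-line check from the series expansion (or, alternatively, from the fact that $e^{Ft}$ commutes with $F$ and leaves its eigenspaces invariant). Everything else is a direct substitution into the Floquet factorization already established in the previous theorem.
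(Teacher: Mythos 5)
Your proposal is correct and follows essentially the same route as the paper: apply $X(t)=P(t)e^{Ft}$ to an eigenvector $v$ of $F$ and read off the quasiperiodic factor $e^{\iu\omega t}$ from the $T$-periodicity of $P$. Your write-up is in fact slightly more complete, since you also verify that $x(t)$ solves (\ref{ode}) and is nontrivial via $P(0)=\mathrm{Id}_n$, steps the paper leaves implicit.
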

\begin{proof}
Assume that $i\omega$ is an eigenvalue of $F$ with eigenvector $v$. Then $\rho:=e^{i\omega}$ is an eigenvalue of $e^F$ with the same eigenvector. Let $x(t)=X(t)v=P(t)e^{Ft}v=P(t)\rho^tv=e^{i\omega t}P(t)v$. Then $x(t)$ is $\omega$-quasiperiodic with period $T$.
\end{proof}
\noindent
Observe that $\omega$ is defined modulo the frequency of modulation $\Omega$ given by $$\Omega:= 2\pi/T.$$ Therefore, we define the time-Brillouin zone as $Y_t^*:=\mathbb{C}/(\Omega\mathbb{Z})$.
For each eigenvalue $f:=i\omega$ of F, there is a Bloch solution $x(t)$ which is $\omega$-quasiperiodic.

\begin{definition}
	$e^{i\omega}$ is called  a characteristic multiplier. We refer to $\omega$ as a quasifrequency and to $f:=\iu\omega$ as a \textit{Floquet exponent}.
\end{definition}
\noindent
Note that if $A$ is time independent, then the solution to \eqref{ode} can be written as $x(t)=e^{At}x(0)$. The Floquet exponents are then given in this case by the eigenvalues of $A$. 
\noindent
The fact that the Floquet exponents are defined up to modulo $\iu\Omega$ leads us to the following definition.

\begin{definition}[Folding number]
\label{foldingnumber}
	Let $\omega_F$ be the imaginary part of an eigenvalue of the time independent matrix $F$. Then, we can uniquely write $\omega_F=\omega_0+m\Omega$, where $\omega_0\in [-\Omega/2,\Omega/2)$. The integer $m$ is called the \textit{folding number}.	
\end{definition}

\begin{definition}[Floquet transform]
        Given the lattice $\Lambda$, the dual lattice $\Lambda^*$, $Y^*:=\R^d/\Lambda^*$, and a function $f(x)\in L^2(\R^d)$, the Floquet transform of $f$ is defined as
        $$\FF[f](x,\al):=\sum_{m\in\Lambda}f(x-m)e^{i\al\cdot m}.$$
        Here, $\alpha\in Y^*$ is called \textit{quasiperiodicity} or \textit{quasimomentum}.
\end{definition}
Note that $\FF[f]$ is $\al$-quasiperiodic in $x$ with period $\Lambda$ and periodic in $\al$ with period $\Lambda^*$:
\begin{align*}
    \FF[f](x+l,\al) & =\sum_{m\in \Lambda}f(x+l-m)e^{\iu\al\cdot m}=e^{\iu\al\cdot l}\FF[f](x,\al),\quad\forall\, l\in\Lambda,\\
    \FF[f](x,\al+\beta) & =\sum_{m\in\Lambda}f(x-m)e^{-\iu\al\cdot\beta}=\FF[f](x,\al),\quad\forall\, \beta\in \Lambda^*.
\end{align*}
Moreover, the Floquet transform $\FF:L^2(\R^d)\to L^2(Y\times Y^*)$ is an invertible map whose inverse is given by $$\FF^{-1}[g](x)=\frac{1}{|Y^*|}\int_{Y^*}g(x,\al)d\al,\quad x\in\R^d.$$
Applying Floquet transform to the variable $x$ and seeking solutions quasiperiodic in $t$, the wave equation (\ref{waveequation}) becomes
 \begin{equation} \label{eq:wave_transf}
 	\begin{cases}\ \ds \left(\frac{\p }{\p t} \frac{1}{\kappa(x,t)} \frac{\p}{\p t} - \nabla \cdot \frac{1}{\rho(x,t)} \nabla\right) u(x,t) = 0,\\[0.3em]
 		\	u(x,t)e^{-\iu \alpha\cdot x} \text{ is $\Lambda$-periodic in $x$,}\\
 		\	u(x,t)e^{-\iu \omega t} \text{ is $T$-periodic in $t$}. 
 	\end{cases}
 \end{equation} 
For a given $\alpha\in Y^*$, we seek $\omega\in Y_t^*$ such that there is a non-zero solution $u$ to (\ref{eq:wave_transf}).
\begin{definition}
	Quasifrequencies $\w$ can be viewed as functions $\alpha \mapsto \omega(\alpha)$ and are called \textit{band functions}. Together, the $n$ band functions constitute the \textit{band structure} or \textit{dispersion relationship} of the material.
\end{definition}
\noindent The \textit{quasiperiodicity} or \textit{quasimomentum} $\alpha$ corresponds to the direction of wave propagation, and we therefore introduce the following definition.
\begin{definition}
 	Waves propagate reciprocally if for every $\alpha \in Y^*$, the set of quasifrequencies of  (\ref{eq:wave_transf}) at $\alpha$ coincides with the set of quasifrequencies at $-\alpha$. The reciprocal equation associated to  (\ref{eq:wave_transf}) for $\alpha\in Y^*$ is that with $-\alpha$.
\end{definition}

\subsection{Layer potentials and the quasiperiodic capacitance matrix}

	Let $\alpha\notin Y^*\setminus \{0\}$ and let $G$ be the Green's function for the Laplace equation in $d=2$ or $d=3$. Then we can define the quasiperiodic Green's function $G^{\alpha}(x)$ as the Floquet transform of $G(x)$, i.e., 
	\begin{equation}\label{eq:xrep}
		G^{\alpha}(x) := \sum_{m \in \Lambda} G(x-m)e^{\iu \alpha \cdot m}.
	\end{equation}
	The series in (\ref{eq:xrep}) converges uniformly for $x$ and $y$ in compact sets of $\mathbb{R}^d$, $x\neq y$ and $\alpha \neq 0$.  The quasiperiodic single layer potential  $\mathcal{S}_D^{\alpha}$ is defined as
	$$\mathcal{S}_D^{\alpha}[\varphi](x) := \int_{\partial D} G^{\alpha} (x-y) \varphi(y) \dx\sigma(y),\quad x\in \mathbb{R}^d.$$
	
	\begin{definition}[Quasiperiodic capacitance matrix] \label{QCM}
		Assume $\alpha \neq 0$. For a system of $N\in\N$ resonators $D_1,\dots,D_N$ in $Y$, we define the quasiperiodic capacitance matrix $C^\alpha=(C^\alpha_{ij})\in\mathbb{C}^{N\times N}$ to be the square matrix given by
		\begin{equation} \label{defci}
			C^\alpha_{ij}=-\int_{\p D_i} (\mathcal{S}_D^{\alpha})^{-1}[\chi_{\p D_j}] \dx \sigma,\quad i,j=1,\dots,N,
		\end{equation}
		where $\chi_{\p D_j}$ is the characteristic function of $\partial D_j$.
	\end{definition}
	At $\alpha = 0$, we can define $C^0$ by the limit of $C^\alpha$ as $\alpha \to 0$;  see \cite{ammari2021functional} for the details.
	
	The following variational representation of the quasiperiodic capacitance matrix
	holds. 
\begin{lemma}
\label{thm:q_cap}
    \cite{ammari2020robust} 
    Assume without loss of generality that $d=3$ and the periodicity direction to be the $x_1$-direction. Equivalently to (\ref{defci}), the quasiperiodic capacitance matrix can be defined as
    \begin{equation}
        C_{i j}^{\alpha}:=\int_{Y \backslash D} \overline{\nabla V_{i}^{\alpha}} \cdot \nabla V_{j}^{\alpha} \mathrm{d} x,
    \end{equation}
    where
    \begin{equation}
        \begin{cases}\Delta V_{j}^{\alpha}=0 & \text { in } Y \backslash D, \\ V_{j}^{\alpha}=\delta_{i j} & \text { on } \partial D_{i}, \\ e^{-\mathrm{i} \alpha x_{1}} V_{j}^{\alpha}\left(x_{1}, x_{2}, x_{3}\right) & \text { is periodic in } x_{1}, \\ V_{j}^{\alpha}\left(x_{1}, x_{2}, x_{3}\right)=O\left(\frac{1}{\sqrt{x_{2}^{2}+x_{3}^{2}}}\right) & \text { as } \sqrt{x_{2}^{2}+x_{3}^{2}} \rightarrow \infty, \text { uniformly in } x_{1}.\end{cases}
    \end{equation}
\end{lemma}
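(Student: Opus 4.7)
The plan is to recover the Dirichlet-energy formula from the layer-potential definition \eqref{defci} by identifying $V_j^\alpha$ with an explicit quasiperiodic single-layer potential and then applying Green's identity. I would set $\psi_j:=(\mathcal{S}_D^{\alpha})^{-1}[\chi_{\p D_j}]$, which is well defined since $\mathcal{S}_D^\alpha$ is invertible for $\alpha\neq 0$. Then $u_j(x):=\mathcal{S}_D^\alpha[\psi_j](x)$ is harmonic on $Y\setminus\p D$ (the only singularities of $G^\alpha$ inside $Y$ lie on $\p D$), is $\alpha$-quasiperiodic in $x_1$ by construction of $G^\alpha$, inherits the transverse $1/r$ decay of $G^\alpha$, and equals $\chi_{\p D_j}$ on $\p D$. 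Uniqueness for the exterior quasiperiodic Dirichlet problem that defines $V_j^\alpha$ then forces $u_j=V_j^\alpha$ in $Y\setminus D$. Inside each resonator $D_k$, the harmonic function $u_j$ has constant Dirichlet data $\delta_{jk}$, hence equals $\delta_{jk}$ identically.

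Next I would apply Green's first identity to the pair $(\overline{V_i^\alpha},V_j^\alpha)$ on $Y\setminus D$. Harmonicity of $V_j^\alpha$ kills the bulk Laplacian term, leaving boundary contributions on $\p D$ and on $\p Y$. Because $\overline{V_i^\alpha}$ takes the real value $\delta_{ik}$ on $\p D_k$, only the component $\p D_i$ survives and contributes $-\int_{\p D_i}\p_\nu V_j^\alpha|_+\,\dx\sigma$, where $\nu$ is the outward normal to $D$ and the minus sign reflects that $\p D$ is the inner boundary of $Y\setminus D$. On the two finite faces of $\p Y$ perpendicular to $x_1$, combining the $\alpha$-quasiperiodicity of both $V_i^\alpha$ and $V_j^\alpha$ with the opposite signs of the outward normals on the two faces gives exact cancellation of the paired contributions; the lateral surface at $\sqrt{x_2^2+x_3^2}\to\infty$ carries an integrand of order $1/r^2$, so its contribution vanishes after truncation at radius $R$ and taking $R\to\infty$.

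Finally, to convert $\p_\nu V_j^\alpha|_+$ into $\psi_j$ on $\p D_i$, I would invoke the single-layer jump relation $\p_\nu\mathcal{S}_D^\alpha[\psi]|_+-\p_\nu\mathcal{S}_D^\alpha[\psi]|_-=\psi$. Since $u_j$ is constant on each $D_k$, the interior normal derivative $\p_\nu u_j|_-$ vanishes on $\p D$, and thus $\p_\nu V_j^\alpha|_+=\psi_j$. Substituting into the reduced boundary term yields
\[
\int_{Y\setminus D}\overline{\nabla V_i^\alpha}\cdot\nabla V_j^\alpha\,\dx x=-\int_{\p D_i}\psi_j\,\dx\sigma=C_{ij}^\alpha,
\]
which matches \eqref{defci}.

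The main obstacle I anticipate is the careful bookkeeping for the boundary integral on $\p Y$ in this slab geometry: $\p Y$ splits into two transverse faces whose contributions must be shown to cancel exactly through the quasiperiodic phases and the sign flip of the outward normal, plus an unbounded lateral surface that has to be handled by truncating at $\sqrt{x_2^2+x_3^2}=R$ and sending $R\to\infty$ using the prescribed $1/\sqrt{x_2^2+x_3^2}$ decay together with a matching estimate on $\nabla V_j^\alpha$. The remaining ingredients—invertibility of $\mathcal{S}_D^\alpha$ at $\alpha\neq 0$, the jump relation on the $C^{1,s}$ boundary, and the constancy of $u_j$ inside each $D_k$—are standard.
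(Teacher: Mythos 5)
The paper offers no proof of this lemma: it is imported by citation from \cite{ammari2020robust}, so there is no in-paper argument to compare against. Your reconstruction is the standard derivation and it is sound: identifying $V_j^\alpha$ with $\mathcal{S}_D^\alpha\big[(\mathcal{S}_D^\alpha)^{-1}[\chi_{\partial D_j}]\big]$ via uniqueness for the quasiperiodic exterior Dirichlet problem, noting that this function is identically $\delta_{jk}$ inside $D_k$, applying Green's first identity on $Y\setminus D$, and closing with the jump relation. The signs check out: $\partial D$ enters $\partial(Y\setminus D)$ with the inward normal, which produces exactly the minus sign in \eqref{defci} once $\partial_\nu V_j^\alpha\big|_+=\psi_j$, and the two faces of $\partial Y$ transverse to $x_1$ cancel because $\overline{V_i^\alpha}\,\partial_n V_j^\alpha$ is genuinely $\Lambda$-periodic while the outward normals flip. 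Two points deserve slightly more than the passing mention you give them: (i) the convergence of the Dirichlet integral over the unbounded cell and the vanishing of the lateral boundary term both require a decay rate for $\nabla V_j^\alpha$, not just for $V_j^\alpha$; for $\alpha\neq 0$ the quasiperiodic Green's function of a one-dimensional lattice in $\mathbb{R}^3$ in fact decays exponentially in the transverse variables, so this is unproblematic but should be stated; (ii) the uniqueness statement that licenses $u_j=V_j^\alpha$ is for an exterior problem in a slab with the prescribed $O\left(1/\sqrt{x_2^2+x_3^2}\right)$ behavior, and is exactly where the invertibility of $\mathcal{S}_D^\alpha$ for $\alpha\neq 0$ and the radiation-type condition interact. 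Both are standard and are handled in the cited reference, so your outline would compile into a complete proof with only routine additions.
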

Based on Lemma \ref{thm:q_cap}, we obtain the following result. 
\begin{prop}
\label{thm:pos}
    For $\alpha\neq0$, the quasiperiodic capacitance matrix is positive definite.
\end{prop}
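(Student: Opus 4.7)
The plan is to build on the variational representation in Lemma \ref{thm:q_cap}, which expresses the entries of $C^\alpha$ as a gradient pairing of the auxiliary harmonic functions $V_i^\alpha$. For an arbitrary vector $\mathbf{c}=(c_1,\ldots,c_N)^T\in\C^N$, I would introduce the linear combination $W^\alpha := \sum_{j=1}^N c_j V_j^\alpha$, which is still $\alpha$-quasiperiodic in $x_1$, harmonic on $Y\setminus D$, decays as $\sqrt{x_2^2+x_3^2}\to\infty$, and satisfies $W^\alpha=c_i$ on $\p D_i$. Substituting into the variational formula and expanding gives
\begin{equation*}
\mathbf{c}^*\, C^\alpha\, \mathbf{c} \;=\; \sum_{i,j=1}^N \bar c_i c_j \int_{Y\setminus D}\overline{\nabla V_i^\alpha}\cdot\nabla V_j^\alpha\,\dx x \;=\; \int_{Y\setminus D}|\nabla W^\alpha|^2\,\dx x \;\geq\; 0.
\end{equation*}
The same identity, applied with swapped indices and complex conjugation, shows that $C^\alpha$ is Hermitian, so this establishes positive semi-definiteness immediately.

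For strict positive definiteness, I would assume $\mathbf{c}^*C^\alpha\mathbf{c}=0$ and aim to deduce $\mathbf{c}=0$. The vanishing of $\int_{Y\setminus D}|\nabla W^\alpha|^2\,\dx x$ forces $W^\alpha$ to be locally constant on $Y\setminus D$. Since the resonators $D_i$ are disjoint compact domains with Hölder boundaries inside $Y$, the set $Y\setminus D$ is connected (its unbounded component in the transverse variables $x_2,x_3$ touches each boundary $\p D_i$), so $W^\alpha$ is in fact a single constant. Combined with the uniform decay condition $W^\alpha(x_1,x_2,x_3)=O(1/\sqrt{x_2^2+x_3^2})$, this constant must be $0$, and then the boundary values give $c_i = W^\alpha|_{\p D_i}=0$ for every $i$.

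The only non-routine step is ruling out the possibility that $W^\alpha$ could be a nonzero constant on the quasiperiodic exterior. The assumption $\alpha\neq 0$ in $Y^*$ would normally enter through the quasiperiodicity: any constant that is genuinely $\alpha$-quasiperiodic in $x_1$ with $\alpha\notin\Lambda^*$ must vanish. In the strip geometry of Lemma \ref{thm:q_cap} the decay at infinity already closes this step without invoking $\alpha\neq 0$, but for the fully periodic setting in higher dimensions (where no decay is available) the $\alpha$-quasiperiodicity is the essential ingredient that kills a constant solution. I expect this compatibility of connectedness, decay, and quasiperiodicity to be the only subtle point; once it is settled the implication $\mathbf{c}^*C^\alpha\mathbf{c}=0\Rightarrow\mathbf{c}=0$ is immediate, giving the positive definiteness of $C^\alpha$.
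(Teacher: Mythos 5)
Your argument is correct and follows essentially the same route as the paper: both pass through the variational representation of Lemma \ref{thm:q_cap} to identify $\mathbf{c}^*C^\alpha\mathbf{c}$ with the Dirichlet energy of $W^\alpha=\sum_j c_j V_j^\alpha$ and then show that a kernel element forces $W^\alpha$ to be a constant that must vanish. The only (immaterial) difference is that you kill the constant via the decay condition in the transverse variables, whereas the paper invokes the $\alpha$-quasiperiodicity with $\alpha\neq 0$ --- a point you yourself identify as the mechanism that generalizes beyond the strip geometry.
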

\begin{proof}
    Let $\zeta\in\mathbb{C}^N$. Using the definition of $C^\alpha$ given by Lemma \ref{thm:q_cap}, we obtain that
    \begin{equation}
        \begin{split}
            \overline{\zeta}C^\alpha \zeta & = \sum_{i=1}^N\sum_{j=1}^N\overline{\zeta_i}\zeta_jC_{ij}^\alpha \\
            & = \int_{Y \backslash D} \sum_{i=1}^N\overline{\zeta_i\nabla V_{i}^{\alpha}} \cdot  \sum_{j=1}^N\zeta_j \nabla V_{j}^{\alpha} \mathrm{d} x \\
            & = \int_{Y \backslash D} \lvert \sum_{i=1}^N{\zeta_i\nabla V_{i}^{\alpha}}\rvert^2 \mathrm{d}x.
        \end{split}
    \end{equation}
    This term is non-negative. Suppose now it is equal to zero, then it follows that
    $V^\alpha:=  \sum_{j=1}^N\zeta_j V_{j}^{\alpha}$ is constant in $Y \backslash D$. But  $V^\alpha$ is 
    $\alpha$-quasiperiodic and therefore, it should be zero. 
\end{proof}

\subsection{Time modulated subwavelength resonators}
For the purpose of this paper, we apply time modulations to the interior of the resonators, while the parameters of the surrounding material are constant in $t$. We let
\begin{equation}
	\label{modulation}
	\kappa(x,t)=
\begin{cases}
	\kappa_0, \ & x\in\mathbb{R}^d\backslash \overline{\mathcal{C}}\\
	\kappa_r\kappa_i(t),\ & x\in\mathcal{C}_i
\end{cases} 
,\ \ 
	\rho(x,t)=
\begin{cases}
	\rho_0, \ & x\in\mathbb{R}^d\backslash \overline{\mathcal{C}}\\
	\rho_r\rho_i(t),\ & x\in\mathcal{C}_i
\end{cases},
\end{equation}
for $i=1,\ldots, N$. Here, $\rho_0$, $\kappa_0$, $\rho_r$, and $\kappa_r$ are positive constants. The functions $\rho_i(t) \in \mathcal{C}^0(\mathbb{R})$ and $\kappa_i(t) \in \mathcal{C}^1(\mathbb{R})$ describe the modulation inside the $i^{\text{th}}$ resonator $\mathcal{C}_i$. Furthermore, we assume that $\rho_i,\kappa_i$ are periodic with period $T$.
\noindent
We define the contrast parameter $\delta$ as 
$$
\delta := \frac{\rho_r}{\rho_0}.
$$
In (\ref{waveequation}), the transmission conditions at $x\in \p D_i$ are
$$ u \big|_+ = u\big|_- \quad \mbox{and} \quad \delta \frac{\partial {u}}{\partial \nu} \bigg|_{+} - \frac{1}{\rho_i(t)}\frac{\partial {u}}{\partial \nu} \bigg|_{-} = 0, \qquad x\in \p D_i, \ t\in \R,$$
where $\partial/\partial \nu$ is the outward normal derivative at $\p D_i$ and $|_{+,-}$ denote the limits from outside and inside $D_i$, respectively. 

In order to achieve subwavelength resonance, we assume that $\delta \ll 1$ and consider regimes where the modulation frequency \begin{equation}
\label{subr} \Omega \big(:= \frac{2\pi}{T}\big) = O(\delta^{1/2}).\end{equation}
We also assume that $\kappa_i'(t)= O(\delta^{1/2})$ for $i=1,\ldots,N.$

Note that in the static case where $\rho_i(t)=\kappa_i(t)=1$ for all $i$, the system of $N$ subwavelength resonators has $N$ {\em subwavelength resonant frequencies} of order  $O(\delta^{1/2})$. We refer the reader to \cite{ammari2021functional} for more details. Note also that (\ref{subr}) allows strong coupling between the time modulations and the response time of the structure \cite{reviewTM2}.

	\label{setting}
We seek solutions to  (\ref{eq:wave_transf}) with modulations given by  (\ref{modulation}). Since $e^{-\mathrm{i}\omega t}u(x,t)$ is a $T$-periodic function of $t$, we can write, using Fourier series expansion,
$$u(x,t)= e^{\iu \omega t}\sum_{n = -\infty}^\infty v_n(x)e^{\iu n\Omega t}.$$
We then have from (\ref{eq:wave_transf}) the following equation in the frequency domain for $n\in \Z$:
\begin{equation} \label{eq:freq}
	\left\{
	\begin{array} {ll}
		\ds \Delta {v_n}+ \frac{\rho_0(\omega+n\Omega)^2}{\kappa_0} {v_n}  = 0 & \text{in } Y \setminus \overline{D}, \\[0.3em]
		\ds \Delta v_{i,n}^* +\frac{\rho_r(\omega+n\Omega)^2}{\kappa_r} v_{i,n}^{**}  = 0 & \text{in } D_i, \\
		\nm
		\ds  {v_n}|_{+} -{v_n}|_{-}  = 0  & \text{on } \partial D, \\
		\nm
		\ds  \delta \frac{\partial {v_n}}{\partial \nu} \bigg|_{+} - \frac{\partial v_{i,n}^* }{\partial \nu} \bigg|_{-} = 0 & \text{on } \partial D_i, \\[0.3em]
		v_n(x)e^{\iu \alpha\cdot x} \text{ is $\Lambda$-periodic in $x$}.
	\end{array}
	\right.
\end{equation}
Here, $v_{i,n}^*(x)$ and $v_{i,n}^{**}(x)$ are defined through the convolutions
$$v_{i,n}^*(x) = \sum_{m = -\infty}^\infty r_{i,m} v_{n-m}(x), \quad  v_{i,n}^{**}(x) = \frac{1}{\omega+n\Omega}\sum_{m = -\infty}^\infty k_{i,m}\big(\omega+(n-m)\Omega\big)v_{n-m}(x),$$
where $r_{i,m}$ and $k_{i,m}$ are the Fourier series coefficients of $1/\rho_i$ and $1/\kappa_i$, respectively:
$$\frac{1}{\rho_i(t)} = \sum_{n = -\infty}^\infty r_{i,n} e^{\iu n \Omega t}, \quad \frac{1}{\kappa_i(t)} = \sum_{n = -\infty}^\infty k_{i,n} e^{\iu n \Omega t}.$$
We can assume that the solution is normalized, i.e., $\|v_0\|_{H^1(Y)} = 1$. 
Here, $H^1$ is the usual Sobolev space of square-integrable functions whose weak derivative is square integrable.  Since $u$ is continuously differentiable in $t$, we then have as $n\to \infty$,
\begin{equation} \label{eq:reg_v}
	\|v_n\|_{H^1(Y)} = o\left(\frac{1}{n}\right).
\end{equation}  
We will consider the case where the modulation of $\rho$ and $\kappa$ has a finite Fourier series with a large number of nonzero Fourier coefficients: 
$$\frac{1}{\rho_i(t)} = \sum_{n = -M}^M r_{i,n} e^{\iu n \Omega t}, \qquad \frac{1}{\kappa_i(t)} = \sum_{n = -M}^M k_{i,n} e^{\iu n \Omega t},$$
for some $M\in \N$ satisfying
$$M = O\left(\delta^{-\gamma/2}\right),$$
where $0 < \gamma < 1$. We seek subwavelength quasifrequencies $\omega$ of the wave equation \eqref{eq:wave_transf} in the sense of the following definition first introduced in \cite{ammari2020time}.

 \begin{definition}[Subwavelength quasifrequency] \label{def:sub}
 		A quasifrequency $\omega = \omega(\delta) \in Y^*_t$ of \eqref{eq:wave_transf} is said to be a \emph{subwavelength quasifrequency} if there is a corresponding Bloch solution $u(x,t)$, depending continuously on $\delta$, which is essentially supported in the low-frequency regime, i.e., it can be written as
 		$$u(x,t)= e^{\iu \omega t}\sum_{n = -\infty}^\infty v_n(x)e^{\iu n\Omega t},$$
 		where 
 		$$\omega \rightarrow 0 \ \text{and} \ M\Omega \rightarrow 0 \ \text{as} \ \delta \to 0,$$
 		for some integer-valued function $M=M(\delta)$ such that, as $\delta \to 0$, we have
 		$$\sum_{n = -\infty}^\infty \|v_n\|_{L^2(Y)} = \sum_{n = -M}^M \|v_n\|_{L^2(Y)} + o(1).$$
 	\end{definition}
\noindent In particular, one can prove that if  $\omega$ is a subwavelength quasifrequency, then $\omega$ and the frequency of modulation $\Omega$ have the same order:
$$\omega = O\left(\delta^{1/2}\right).$$ 	
%
\noindent
Next, we introduce the capacitance matrix characterization of the band structure of time dependent periodic systems of subwavelength resonators. As shown below, $C^\alpha$ offers a discrete approximation to subwavelength quasifrequency problems. 
More details can be found in \cite{ammari2020time}. 
\begin{thm} \label{thm:pre}
		As $\delta \to 0$, the subwavelength quasifrequencies of the wave equation (\ref{eq:wave_transf}) are, up to leading-order, given by the quasifrequencies of the system of ODEs:
		\begin{equation}
		\label{Hill}
			\frac{\dx^2\phi}{\dx t^2}(t)+M^\alpha(t)\phi(t)=0,	
		\end{equation}
		where $M^\alpha$ is the matrix defined as
		\begin{equation}
			M^\alpha(t)=\frac{\delta\kappa_r}{\rho_r}W_1(t)C^\alpha W_2(t)+W_3(t),
		\end{equation}
		and $W_1,W_2$ and $W_3$ are the diagonal given by
		\begin{equation}
			(W_1)_{ii}=\frac{\sqrt{\kappa_i}\rho_i}{\lvert D_i\rvert},\quad (W_2)_{ii}=\frac{\sqrt{\kappa_i}}{\rho_i},\quad (W_3)_{ii}=\frac{\sqrt{\kappa_i}}{2}\frac{\dx}{\dx t}\frac{\dx \kappa_i/dt}{\kappa_i^{3/2}}.
		\end{equation}
		Here, $\lvert D_i\rvert$ denotes the volume of $D_i$. 
\end{thm}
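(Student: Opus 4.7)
The plan is to derive the Hill system (\ref{Hill}) in three stages: (i) a time Fourier decomposition reducing (\ref{eq:wave_transf}) to the frequency-domain system (\ref{eq:freq}); (ii) a subwavelength, quasistatic spatial reduction of each mode via the capacitance matrix $C^\alpha$; and (iii) a Liouville-type change of variable that recasts the resulting ODE system into the canonical second-order form $\phi''+M^\alpha\phi=0$.

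First, I would start from the Bloch ansatz $u(x,t)=e^{\iu\omega t}\sum_n v_n(x)e^{\iu n\Omega t}$, so that the wave equation is equivalent to the coupled system (\ref{eq:freq}). Since $1/\rho_i$ and $1/\kappa_i$ have at most $2M+1$ nonzero Fourier coefficients and $M\Omega\to 0$ as $\delta\to 0$, only finitely many modes actively couple, and the decay estimate (\ref{eq:reg_v}) controls the tails of the series. Next, as $(\omega+n\Omega)^2=O(\delta)$ uniformly in $|n|\le M$, each $v_n$ is, in the exterior, $\alpha$-quasiperiodically harmonic to leading order, and inside each $D_j$ the transmission condition $\p_\nu v_{i,n}^*|_-=\delta\,\p_\nu v_n|_+$ forces $v_n$ to be constant to leading order: denote this constant value by $\phi_{j,n}$. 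The quasistatic exterior representation is then
\begin{equation*}
v_n(x)\;\approx\;\sum_{j=1}^N\phi_{j,n}\,V_j^\alpha(x),\qquad x\in Y\setminus\overline{D},
\end{equation*}
where the $V_j^\alpha$ are the capacity potentials of Lemma \ref{thm:q_cap}. Using $\int_{\p D_i}\p_\nu V_j^\alpha|_+\dx\sigma=-C^\alpha_{ij}$ (a consequence of Green's identity in $Y\setminus\overline D$ together with the quasiperiodic boundary conditions), integrating the interior equation of (\ref{eq:freq}) over $D_i$ and substituting produces, for each $n$ with $|n|\le M$, the finite-dimensional relation
\begin{equation*}
-\delta\sum_{j=1}^N C^\alpha_{ij}\phi_{j,n}+|D_i|\,\frac{\rho_r(\omega+n\Omega)}{\kappa_r}\sum_{m=-M}^M k_{i,m}\bigl(\omega+(n-m)\Omega\bigr)\phi_{i,n-m}\;=\;o(\delta).
\end{equation*}

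Third, I would reassemble the Fourier series. Setting $\phi_i(t):=e^{\iu\omega t}\sum_n\phi_{i,n}e^{\iu n\Omega t}$, the Cauchy-product identity converts the convolution against the Fourier coefficients of $1/\kappa_i$ into pointwise multiplication by $1/\kappa_i(t)$, while multiplication by $\omega+n\Omega$ corresponds to $-\iu\,\p_t$. Summing the displayed relation over $n$ against $e^{\iu(\omega+n\Omega)t}$ therefore produces a second-order system of the shape
\begin{equation*}
\frac{\dx}{\dx t}\!\left(\frac{1}{\kappa_i(t)}\frac{\dx\phi_i}{\dx t}\right)+\frac{\delta\kappa_r}{\rho_r|D_i|}\sum_{j=1}^N A^\alpha_{ij}(t)\,\phi_j(t)\;=\;0,
\end{equation*}
for an explicit time-dependent coupling $A^\alpha(t)$ built from $C^\alpha$ together with $\rho_i(t)$ factors coming from the transmission condition. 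A Liouville-type substitution $\phi_i(t)=f_i(t)\,\widetilde\phi_i(t)$, with $f_i\propto\sqrt{\kappa_i}/\rho_i$ chosen so that the first-derivative term cancels identically, then puts the system in the canonical form $\widetilde\phi''+M^\alpha\widetilde\phi=0$: the residual zero-order contribution from the derivatives of $f_i$ reorganizes into the diagonal matrix $W_3(t)$, while the coupling term restructures into $(\delta\kappa_r/\rho_r)\,W_1(t)C^\alpha W_2(t)$, giving exactly (\ref{Hill}).

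The principal obstacle is step (ii): proving that the quasistatic capacitance approximation produces an error that is $o(\delta)$ \emph{uniformly} in $|n|\le M$, so that after summation over the $2M+1=O(\delta^{-\gamma/2})$ active modes it still vanishes in the limit $\delta\to 0$. This is handled by layer-potential estimates for $\mathcal{S}_D^\alpha$ combined with the regularity bound (\ref{eq:reg_v}) for the tails $|n|>M$. A secondary, purely algebraic difficulty is executing the Liouville transformation carefully enough to identify the exact diagonal matrices $W_1,W_2,W_3$ in the statement of the theorem, in particular to track how the $\rho_i(t)$ factors from the transmission condition distribute symmetrically into $W_1$ and $W_2$ around $C^\alpha$.
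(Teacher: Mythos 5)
First, a point of orientation: the paper does not actually prove Theorem \ref{thm:pre} --- it states the result and defers entirely to \cite{ammari2020time} --- so your proposal can only be measured against the derivation in that reference. Your three-stage skeleton (Fourier decomposition to \eqref{eq:freq}, quasistatic capacitance reduction, Liouville substitution $\phi_i=\sqrt{\kappa_i}\,\widetilde\phi_i$, which indeed reproduces the stated $(W_3)_{ii}$) is the right overall strategy, and the identity $\int_{\p D_i}\p_\nu V_j^\alpha|_+\,\dx\sigma=-C^\alpha_{ij}$ is correct.

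The gap sits at the single genuinely delicate point: where the density modulation enters $M^\alpha$. Your displayed intermediate relation contains no Fourier coefficient $r_{i,m}$ of $1/\rho_i$ at all, and this cannot be repaired afterwards ``from the transmission condition'': by the time you have written that relation, the transmission condition has already been used to replace $\int_{\p D_i}\p_\nu v_{i,n}^*|_-$ by $\delta\int_{\p D_i}\p_\nu v_n|_+$, and the one factor of $\rho_i(t)$ it carries multiplies both sides of the flux balance and cancels. Feeding your displayed relation through the time-domain reassembly and the Liouville substitution yields
\begin{equation*}
M^\alpha=\frac{\delta\kappa_r}{\rho_r}\,\widetilde W_1\,C^\alpha\,\widetilde W_2+W_3,\qquad (\widetilde W_1)_{ii}=\frac{\sqrt{\kappa_i}}{|D_i|},\quad (\widetilde W_2)_{jj}=\sqrt{\kappa_j},
\end{equation*}
i.e.\ without the $\rho_i$ and $1/\rho_j$ factors of the theorem. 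The two answers are not equivalent: $W_1C^\alpha W_2=R\,\widetilde W_1C^\alpha\widetilde W_2\,R^{-1}$ with $R(t)=\mathrm{diag}(\rho_i(t))$, and conjugating a Hill system by a time-dependent matrix introduces $\dot R$ terms, so it changes the quasifrequencies. Since the $\rho_i/\rho_j$ asymmetry in $W_1,W_2$ is precisely what drives the paper's subsequent results (the $e^{\iu\phi_i}-e^{\iu\phi_j}$ structure in Theorem \ref{thm:modulatingboth} and the non-reciprocity of Theorem \ref{thm:bandgap}), the step you have deferred is the heart of the proof: you must track carefully that the quantity which is spatially constant to leading order inside $D_i$ is $v_{i,n}^*$ (the Fourier modes of $u/\rho_i$), while the Dirichlet data matched to the exterior capacitance problem are those of $v_n$ (the modes of $u$ itself), and show exactly how this mismatch produces the $\rho_i(\cdot)\,C^\alpha_{ij}\,(\cdot)/\rho_j$ coupling before the Liouville step. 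A secondary inaccuracy: ``$(\omega+n\Omega)^2=O(\delta)$ uniformly in $|n|\le M$'' is false, since $M\Omega=O(\delta^{(1-\gamma)/2})\gg\delta^{1/2}$; the available uniform bound is only $o(1)$, and the error bookkeeping over the $O(\delta^{-\gamma/2})$ active modes has to be carried out against that weaker estimate.
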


\begin{lemma} Let $\LL^\al$ be the operator associated with the system of Hill's equations 
$$\LL^\alpha[\phi]:= \frac{\dx^2\phi}{\dx t^2}(t)+M^\alpha(t)\phi(t)=0.$$
If $\omega$ is a quasifrequency associated to $\LL^\alpha$, then $-\bar\w$ is a quasifrequency associated to $\LL^{-\al}$.
\end{lemma}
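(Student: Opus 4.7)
The plan is to produce a genuine Bloch/quasiperiodic solution of $\LL^{-\alpha}$ by complex-conjugating one for $\LL^{\alpha}$, and to show that this operation sends quasifrequency $\omega$ to $-\bar\omega$. So the core task reduces to verifying the symmetry $\overline{M^{\alpha}(t)}=M^{-\alpha}(t)$; once this is in hand, the rest is essentially bookkeeping.

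First I would examine each factor in $M^{\alpha}(t)=\tfrac{\delta\kappa_r}{\rho_r}W_1(t)C^{\alpha}W_2(t)+W_3(t)$. The diagonal matrices $W_1, W_2, W_3$ depend only on the real-valued modulations $\rho_i(t),\kappa_i(t)$ and the (real, positive) volumes $|D_i|$, so they are real and in particular invariant under complex conjugation. Thus the only piece that carries $\alpha$-dependence through complex structure is $C^{\alpha}$.

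The key identity to establish is $\overline{C^{\alpha}}=C^{-\alpha}$. I would argue this from the variational representation in Lemma \ref{thm:q_cap}: conjugating the boundary value problem that defines $V_j^{\alpha}$ gives a harmonic function with the same Dirichlet data on $\partial D$ but with $e^{+\iu\alpha x_1}\overline{V_j^{\alpha}}$ periodic in $x_1$. By uniqueness of the boundary value problem this forces $\overline{V_j^{\alpha}}=V_j^{-\alpha}$, and substituting into $C_{ij}^{\alpha}=\int_{Y\setminus D}\overline{\nabla V_i^{\alpha}}\cdot\nabla V_j^{\alpha}\,dx$ yields $\overline{C_{ij}^{\alpha}}=C_{ij}^{-\alpha}$. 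Combining with the reality of $W_1,W_2,W_3$ immediately gives $\overline{M^{\alpha}(t)}=M^{-\alpha}(t)$.

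With this symmetry, complex-conjugating $\LL^{\alpha}[\phi]=\phi''+M^{\alpha}(t)\phi=0$ produces $\overline{\phi}\,''+M^{-\alpha}(t)\overline{\phi}=0$, i.e., $\LL^{-\alpha}[\overline{\phi}]=0$. Finally, if $\omega$ is a quasifrequency of $\LL^{\alpha}$, there is an $\omega$-quasiperiodic solution with $\phi(t+T)=e^{\iu\omega T}\phi(t)$, hence $\overline{\phi}(t+T)=e^{-\iu\bar\omega T}\overline{\phi}(t)$, showing $\overline{\phi}$ is $(-\bar\omega)$-quasiperiodic and $-\bar\omega$ is a quasifrequency of $\LL^{-\alpha}$.

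The only genuinely non-trivial step is the conjugation identity $\overline{C^{\alpha}}=C^{-\alpha}$; everything after that is a one-line manipulation. I would anticipate the only subtlety there being the $d=2$ case, where the growth/radiation condition in Lemma \ref{thm:q_cap} is tailored to $d=3$, but the same conjugation argument goes through directly via the representation \eqref{defci} using $\overline{G^{\alpha}(x-y)}=G^{-\alpha}(x-y)$, which follows term-by-term from the defining series since $G$ itself is real.
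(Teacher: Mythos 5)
Your proposal is correct and follows essentially the same route as the paper's own proof: establish $\overline{M^{\alpha}(t)}=M^{-\alpha}(t)$ from the reality of $W_1,W_2,W_3$ together with $\overline{C^{\alpha}}=C^{-\alpha}$, then conjugate the Hill's equation and the quasiperiodicity relation to see that $\overline{\phi}$ is a $(-\bar\omega)$-quasiperiodic solution for $\LL^{-\alpha}$. The only difference is that you derive the conjugation symmetry of $C^{\alpha}$ explicitly (via the variational characterization, or via $\overline{G^{\alpha}}=G^{-\alpha}$), whereas the paper simply invokes the known symmetry properties of the quasiperiodic capacitance matrix; your version also correctly writes the multiplier as $e^{-\iu\bar\omega T}$ rather than $e^{-\iu\omega T}$.
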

\begin{proof}
\label{bandsym}
Since the quasiperiodic capacitance matrix $C^\al$ is Hermitian (see, for instance, \cite{ammari2021functional})  and the entries of $W_i$ $(i=1,2,3)$ are all real, $M^{-\al}=\overline{M^{\al}}$. If $\w$ is a quasifrequency associated with $\al$, i.e., if there exists a non-trivial solution $\phi$ to (\ref{Hill}) with $\phi(t+T)=e^{i\w T}\phi(t)$, then $-\w$ is a quasifrequency associated with $-\al$, where the corresponding solution is given by $\overline{\phi}$ and $\overline\phi(t+T)=e^{-i\w T}\overline\phi(t)$.
\end{proof}

\section{Asymptotic analysis of the quasifrequencies and its implications}
\label{sec2}
\subsection{Problem formulation}
In the following, we shall study the effect of small periodic perturbations of the material parameters $\rho(t)$ and $\kappa(t)$ on the subwavelength quasifrequencies. Similar to perturbation theory in quantum mechanics \cite{perturbation}, we assume that $M^\alpha(t)$ is an analytic function of $\varepsilon$ at $\varepsilon=0$ and can be written as 
\begin{equation} \label{expm}
M^\alpha(t) = M^\alpha_0 + \varepsilon M_1^\alpha(t) + \ldots + \varepsilon^n M_n^\alpha(t) + \ldots,
\end{equation}
where $\varepsilon > 0$ is some small parameter describing the amplitude of the time modulation and $ M^\alpha_0$ corresponds to the unmodulated case. We can assume that the above series converges for $|\varepsilon| < \eps_0$, where $\eps_0 >0$ is independent of $t$, provided that the modulations of $\rho$ and $\kappa$ have finitely many non-zero Fourier coefficients \cite{jinghaothesis,TheaThesis}. 

We shall make use of the asymptotic Floquet analysis developed in \cite{TheaThesis}, which is a combination of perturbation analysis and Floquet theory; see also \cite{Yakubovich}. We can rewrite the second-order ODE \eqref{Hill} into
\begin{equation}\label{eq:1dsys}
	\frac{\dx y}{\dx t}(t) = {A}(t)y(t), \qquad {A}(t):=\begin{pmatrix}
	0 & \mathrm{Id}_N\\
	-M^\alpha(t) & 0
\end{pmatrix}.
\end{equation}
We will focus on the asymptotic analysis of the quasifrequencies associated with $\pm\alpha$ in terms of $\varepsilon$.
When the setting is clear, we shall omit the superscript $\al$ and write the fundamental solution as $X(t)=P(t)e^{Ft}$ (c.f. (\ref{Floquettheory})). The reader shall bear in mind that $A(t)$, $P(t)$ and $F$ all depend on $\al$.
We can then write \cite{jinghaothesis,TheaThesis}
\begin{equation} 
\label{aexp}
\left\{
\begin{array}{lll}
    A(t)&=&A_0+\eps A_1(t) +\cdots +\eps^nA_n(t)+\cdots, \\
    \nm
    P(t)&=& P_0+\eps P_1(t) +\cdots +\eps^nP_n(t)+\cdots, \\
    \nm
    F&=& F_0+\eps F_1+\cdots + \eps^nF_n+\cdots . 
\end{array}
\right.
\end{equation}
Since $A(t)$ is $T$-periodic (c.f.(\ref{ode})), the $A_n$'s are also $T$-periodic for all $n\geq 1$. With $\Omega={2\pi}/{T}$, we can write 
\begin{equation}
    A_n=\sum_{m\in\Z}A_n^{(m)}e^{\iu \Omega m}.
\end{equation}
Asymptotic analysis amounts to explicitly expanding the Floquet matrix $F$ in terms of the amplitude of modulation $\eps$, and then applying eigenvalue perturbation theory to derive asymptotic formulas of the form  $f=f_0+\eps f_1+\cdots$ for 
the eigenvalues  $f$ of $F$. In the following sections, we will explicitly compute the quasifrequencies up to first-order in $\eps$.
Assume now that $A_0$ defined in (\ref{aexp}) is diagonal. Let $m_i$ be the folding number of $(A_0)_{ii}$. Then 
\begin{equation} \label{F0}
F_0=A_0-i\Omega\begin{psmallmatrix}
m_1 &&\\&\ddots&\\&&m_n\end{psmallmatrix}.
\end{equation}

The most interesting case for us is to consider the perturbations due to the modulations at a degenerate point $f_0$ of $F_0$ (with corresponding eigenspace of dimension higher than one). In view of (\ref{F0}), such degeneracy can be obtained through folding \cite{ammari2020time}. 

We need the following lemma. 
\begin{lemma} \label{lemma31}  With the same notation as above, we have
\begin{itemize}
\item For every $j$, $(F_1)_{jj}=(A_1^{(0)})_{jj}$;
\item If $(F_0)_{ll}=(F_0)_{jj}$ for some $l\neq j$, then $(F_1)_{jl}=(A_1^{(m_l-m_j)})_{jl}$. More generally, for $l\neq j$, we have
  \begin{equation}
      (F_1)_{jl}=
    \begin{cases}
        \left(A_1^{(m_j-m_l)}\right)_{jl} & \text{if } (F_0)_{jj}=(F_0)_{ll},\\
        \left((F_0)_{ll}-(F_0)_{jj}\right)\sum_{m\in\Z}\frac{\left(A_1^m\right)_{jl}}{i\Omega m+(A_0)_{ll}-(A_0)_{jj}} &\text{otherwise}.
    \end{cases}
    \end{equation}
\end{itemize}
\end{lemma}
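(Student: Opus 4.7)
The plan is to work directly with the master equation induced by the Floquet decomposition. Inserting $X(t)=P(t)e^{Ft}$ into $\dot X = A(t)X$ and cancelling $e^{Ft}$ gives
$$\dot P(t) = A(t)\,P(t) - P(t)\,F,$$
and substituting the expansions \eqref{aexp} generates recursive equations for $P_n$ and $F_n$. I would match powers of $\eps$ and focus on the order-one relation.

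At order zero I need $P_0$. Combining $F_0 = A_0 - \iu\Omega\,\mathrm{diag}(m_1,\dots,m_n)$ with the initial condition $P_0(0)=\mathrm{Id}_n$ inherited from $P(0)=\mathrm{Id}_n$ gives $P_0(t) = \mathrm{diag}\bigl(e^{\iu\Omega m_j t}\bigr)$, which is $T$-periodic because each $m_j$ is an integer. At order one,
$$\dot P_1 = A_0 P_1 - P_1 F_0 + A_1(t)\, P_0 - P_0\, F_1.$$
Reading this entry-wise and using that $A_0$, $F_0$, and $P_0(t)$ are all diagonal produces, for each index pair $(j,l)$, the scalar equation
$$(\dot P_1)_{jl}(t) - \bigl[(A_0)_{jj} - (F_0)_{ll}\bigr] (P_1)_{jl}(t) = (A_1)_{jl}(t)\,e^{\iu\Omega m_l t} - (F_1)_{jl}\, e^{\iu\Omega m_j t}.$$

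The central step is a Fourier matching. Writing $(P_1)_{jl}(t)=\sum_n p^{(n)}_{jl} e^{\iu\Omega n t}$ (periodicity of $P_1$ comes from Floquet's theorem) and using $(A_0)_{jj} = (F_0)_{jj} + \iu\Omega m_j$, I would equate coefficients of $e^{\iu\Omega n t}$ on both sides. After reindexing so that $m = n-m_l$ labels the mode of $A_1$ that drives mode $n$ of $P_1$, this yields
$$\bigl[\iu\Omega n - (A_0)_{jj} + (F_0)_{ll}\bigr] p^{(n)}_{jl} = (A_1^{(n - m_l)})_{jl} \quad (n \ne m_j),$$
$$\bigl[(F_0)_{ll} - (F_0)_{jj}\bigr] p^{(m_j)}_{jl} = (A_1^{(m_j - m_l)})_{jl} - (F_1)_{jl}.$$
For $j=l$ the left side of the second equation vanishes identically, so $(F_1)_{jj} = (A_1^{(0)})_{jj}$, which is the first bullet. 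For $j\ne l$ with $(F_0)_{jj}=(F_0)_{ll}$ the same vanishing forces $(F_1)_{jl} = (A_1^{(m_j-m_l)})_{jl}$, which is the degenerate subcase of the second bullet.

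The remaining case $j\ne l$ with $(F_0)_{jj}\ne (F_0)_{ll}$ is subtler, because the ODE alone leaves $(F_1)_{jl}$ undetermined: this is the gauge freedom of the Floquet decomposition, which I would fix by the normalization $P_1(0)=0$ (forced by $P(0)=\mathrm{Id}_n$ together with $P_0(0)=\mathrm{Id}_n$). The condition reads $\sum_n p^{(n)}_{jl}=0$. I would solve for each $p^{(n)}_{jl}$ with $n\ne m_j$ from the first matched equation, read $p^{(m_j)}_{jl}$ from the second, and substitute into the sum to isolate $(F_1)_{jl}$. After reindexing by $m=n-m_l$ and repackaging the previously isolated $(A_1^{(m_j-m_l)})_{jl}$ term back into the series (using that $(F_0)_{ll}-(F_0)_{jj}$ is precisely the denominator $\iu\Omega m + (A_0)_{ll}-(A_0)_{jj}$ evaluated at the excluded index $m = m_j - m_l$), everything recombines into the single series formula stated in the lemma. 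The main obstacle is purely bookkeeping: tracking the shifts $m_j,m_l$ consistently between the Fourier indices of $P_1$ and $A_1$, and making sure the reconstructed series runs over all $m\in\Z$; convergence is automatic because the modulations have only finitely many nonzero Fourier coefficients by assumption.
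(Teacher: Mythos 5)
Your proposal is correct, and it is essentially the same asymptotic Floquet argument that the paper itself does not reproduce but defers to \cite{jinghaothesis,TheaThesis}: expand $\dot P = A(t)P - PF$ order by order, use that $P_0(t)=\mathrm{diag}(e^{\iu\Omega m_j t})$, match Fourier coefficients of the order-$\eps$ equation, and fix the otherwise undetermined off-diagonal entries of $F_1$ via the normalization $P_1(0)=0$. One remark: your exponent $(A_1^{(m_j-m_l)})_{jl}$ in the degenerate subcase agrees with the displayed general formula in Lemma \ref{lemma31} and with Theorem \ref{thm:floquet_matrix_elements}, so the $m_l-m_j$ appearing in the lemma's introductory sentence of the second bullet is a typo rather than an error in your derivation.
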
\noindent
Proofs of the results stated in Lemma \ref{lemma31} can be found in \cite{jinghaothesis,TheaThesis}.
The following theorem can be directly derived from the above lemma using the eigenvalue perturbation theory described in the appendix. 
\begin{thm}
\label{thm:floquet_matrix_elements}
Let $f_0$ be a degenerate point with multiplicity $r$. Then $F$ has associated eigenvalues given by 
\begin{equation}
\label{eigenexp}
f_0 + \varepsilon f_i + O(\varepsilon^2),
\end{equation}
where $f_i$, for $i=1,\ldots,r$, are the eigenvalues of the $r\times r$ upper-left block of $F_1$, whose entries are given by
	\begin{equation} 
	\label{eq:matrixA1}
		(F_1)_{lk}=\left(A_1^{(m_l-m_k)}\right)_{lk} \ \text{for} \ l,k = 1,\ldots,r,
	\end{equation}
	with  $m_l$ and $m_k$ denoting the folding numbers of the $l$-th and $k$-th eigenvalues of $A_0$.
\end{thm}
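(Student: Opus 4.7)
The plan is to identify the $r$-dimensional eigenspace of $F_0$ at the degenerate point $f_0$, read off the relevant block of $F_1$ from Lemma~\ref{lemma31}, and then apply the standard eigenvalue perturbation statement from the appendix to the analytic family $F(\varepsilon)=F_0+\varepsilon F_1+O(\varepsilon^2)$.

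First I would normalize coordinates. Since $A_0$ is assumed diagonal, $F_0$ is diagonal by \eqref{F0}, so all of its eigenvalues are read off the diagonal. By hypothesis $f_0$ appears with multiplicity $r$, and after relabelling we may place these $r$ occurrences in the first $r$ slots, so that $(F_0)_{ii}=f_0$ for $i=1,\ldots,r$ and $(F_0)_{ii}\neq f_0$ otherwise. The eigenspace of $F_0$ associated with $f_0$ is then exactly $\mathrm{span}(e_1,\ldots,e_r)$, and the orthogonal projector $\Pi$ onto this eigenspace simply extracts the $r\times r$ upper-left block of any matrix.

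Next I would compute this block for $F_1$. For the diagonal entries, the first bullet of Lemma~\ref{lemma31} gives $(F_1)_{jj}=(A_1^{(0)})_{jj}$, which coincides with the claimed formula \eqref{eq:matrixA1} at $l=k=j$ because $m_j-m_j=0$. For the off-diagonal entries inside the block, i.e.\ $l\neq k$ with $1\le l,k\le r$, we have $(F_0)_{ll}=(F_0)_{kk}=f_0$, so the first branch of the second bullet of Lemma~\ref{lemma31} applies and directly yields the Fourier coefficient of $A_1$ at the index dictated by the difference of folding numbers, matching \eqref{eq:matrixA1}. The off-diagonal entries of $F_1$ that couple the block to the other eigenspaces are irrelevant for the first-order splitting.

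Finally I would invoke the eigenvalue perturbation lemma from the appendix applied to $F(\varepsilon)$ at the isolated degenerate eigenvalue $f_0$ of $F_0$. That lemma states that the $r$ eigenvalues of $F(\varepsilon)$ emanating from $f_0$ admit an expansion $f_0+\varepsilon f_i+O(\varepsilon^2)$, where the $f_i$ are precisely the eigenvalues of $\Pi F_1 \Pi$ acting on the unperturbed eigenspace. Combined with the preceding step, $\Pi F_1 \Pi$ is exactly the $r\times r$ upper-left block described by \eqref{eq:matrixA1}, giving the claim.

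The only genuinely delicate point is the applicability of the perturbation lemma: one needs $F_0+\varepsilon F_1+\cdots$ to be an analytic family in a neighborhood of $\varepsilon=0$, and one needs $f_0$ to be an isolated eigenvalue of $F_0$ so that the Riesz projector onto its spectral subspace varies analytically. Analyticity of the series is provided by \eqref{aexp} under the finite-Fourier assumption on the modulation, and isolation of $f_0$ in $\sigma(F_0)$ is automatic because $F_0$ is a diagonal matrix with finitely many distinct diagonal entries, so I do not expect any obstruction beyond bookkeeping.
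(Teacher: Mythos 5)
Your proposal is correct and follows essentially the same route as the paper: the paper itself states that the theorem ``can be directly derived'' by combining Lemma \ref{lemma31} (which identifies the upper-left $r\times r$ block of $F_1$ with the Fourier coefficients $(A_1^{(m_l-m_k)})_{lk}$) with the degenerate eigenvalue perturbation argument of the appendix, whose effective Hamiltonian $\mathcal{H}=PF_0P+\varepsilon PF_1P+O(\varepsilon^2)$ yields the first-order corrections as the eigenvalues of $PF_1P$. Your remarks on analyticity of the family and isolation of $f_0$ are the right (and only) hypotheses to check, and they hold here for the reasons you give.
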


\begin{cor}
\label{example_f1}
If the degenerate points are of order $r=2$ and $A_1$ has no constant part; i.e., $A_1^{(0)} = 0$, then 
the leading-order terms $(f_i)_{i=1,2}$ in (\ref{eigenexp}) are given by the eigenvalues of the matrix
\begin{equation}
	\label{pvalue}
	\begin{pmatrix}
		0 & (F_1)_{12}\\
		(F_1)_{21} & 0	
	\end{pmatrix}.
\end{equation}
In particular, the eigenvalues $f$ of $F$ associated with the degenerate point $f_0$ are given by 
\begin{equation}\label{eq:pert}
	f = f_0 \pm \varepsilon\sqrt{(F_1)_{12}(F_1)_{21}} + O(\varepsilon^2).
\end{equation}
\end{cor}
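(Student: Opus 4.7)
The plan is to reduce the corollary to a direct application of Theorem \ref{thm:floquet_matrix_elements} combined with Lemma \ref{lemma31}, and then compute the eigenvalues of a very simple $2\times 2$ matrix. Since the hypothesis already specializes the setting ($r=2$ and $A_1^{(0)}=0$), there is essentially no analytic work to do beyond extracting the right $2\times 2$ block and diagonalizing it.

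First, I would invoke Theorem \ref{thm:floquet_matrix_elements} with $r=2$. This yields an expansion $f = f_0 + \varepsilon f_i + O(\varepsilon^2)$, where the two leading corrections $f_1, f_2$ are the eigenvalues of the $2 \times 2$ upper-left block of $F_1$, whose entries are given by formula (\ref{eq:matrixA1}). The main content of the proof is then to evaluate this block under the extra assumption $A_1^{(0)}=0$.

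Next, I would examine the diagonal and off-diagonal entries separately using Lemma \ref{lemma31}. For the diagonal entries, the first bullet of the lemma gives $(F_1)_{jj} = (A_1^{(0)})_{jj}$, which is zero by hypothesis for $j=1,2$. For the off-diagonal entries, since $f_0$ is a degenerate eigenvalue of multiplicity $2$, we have $(F_0)_{11}=(F_0)_{22}=f_0$, so the second bullet of Lemma \ref{lemma31} applies and gives $(F_1)_{12}=(A_1^{(m_1-m_2)})_{12}$ and $(F_1)_{21}=(A_1^{(m_2-m_1)})_{21}$. Thus the relevant $2 \times 2$ block reduces exactly to the antidiagonal matrix in (\ref{pvalue}).

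Finally, I would diagonalize this antidiagonal matrix: its characteristic polynomial is $\lambda^2 - (F_1)_{12}(F_1)_{21} = 0$, giving eigenvalues $\pm \sqrt{(F_1)_{12}(F_1)_{21}}$. Substituting back into the expansion from Theorem \ref{thm:floquet_matrix_elements} produces formula (\ref{eq:pert}). No step here is conceptually difficult; the only mild subtlety to watch for is making sure the indices $(m_1,m_2)$ and the ordering of the upper-left block are consistent with the conventions in (\ref{F0}) and the statement of Lemma \ref{lemma31}, in particular that the two basis vectors chosen span the degenerate eigenspace of $F_0$ associated with $f_0$.
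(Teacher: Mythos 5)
Your proof is correct and follows exactly the route the paper intends: the corollary is stated as an immediate consequence of Theorem \ref{thm:floquet_matrix_elements} and Lemma \ref{lemma31}, with the diagonal entries vanishing because $A_1^{(0)}=0$ and the eigenvalues of the resulting antidiagonal block being $\pm\sqrt{(F_1)_{12}(F_1)_{21}}$. Your closing remark about keeping the index conventions of (\ref{F0}) and Lemma \ref{lemma31} consistent is well placed, since the paper's own statement of the lemma is slightly loose on the sign of $m_l-m_j$ in the off-diagonal case.
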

Note that in the non-degenerate case, the leading-order term in $\varepsilon$ vanishes and the perturbations of the Floquet exponents that are due to time modulations with amplitude $\varepsilon$ are quadratic in $\varepsilon$ \cite{TheaThesis}. 

\subsection{Floquet matrix elements}
\label{sec3}
From now on, we only focus on degenerate points with multiplicity $2$ and consider the case where $\kappa_i(t)$ and $\rho_i(t)$ are of the form
\begin{equation} 
\frac{1}{1+\varepsilon \text{cos}(\Omega t + \phi_i)}, \quad i=1, \ldots, N.
\end{equation}
Here, $\Omega$ is the frequency of the modulation, $\varepsilon$ is the modulation amplitude, and $\phi_i$ is a phase shift. From (\ref{eq:matrixA1}) and (\ref{eq:pert}), non-zero Fourier coefficients of $A_1$ are needed to compute the first-order perturbation of the quasifrequencies. This boils down to the computation of the non-zero Fourier coefficients of $M^{\alpha}_1$, where $M^{\alpha}_1$ is the first-order expansion of the matrix $M^\alpha$ with respect to $\varepsilon$, as seen in (\ref{eq:1dsys}). From now on, we fix $\alpha\in Y^*$ and omit the superscript $\alpha$.
\begin{thm}
In the case where the $\rho_i$'s are constant and the $\kk_i$'s are time modulated: $$\kk_i(t):=\frac{1}{1+\eps \text{cos}(\Omega t+\vp_i)}.$$ The only non-zero Fourier coefficients of $M_1$ defined in (\ref{expm}) are given by
\begin{equation}
    \begin{split}
   \left( M_1^{(1)}\right)_{ii}=\left(\frac{\Omega^2}{2}-L_{ii}\right)\frac{e^{i\vp_i}}{2}, 
   & \left( M_1^{(1)}\right)_{ij}=-L_{ij}\cdot \frac{e^{i\vp_i}+e^{i\vp_j}}{4},\\
   \left( M_1^{(-1)}\right)_{ii}=\left(\frac{\Omega^2}{2}-L_{ii}\right)\frac{e^{-i\vp_i}}{2},
   & \left( M_1^{(-1)}\right)_{ij}=-L_{ij}\cdot \frac{e^{-i\vp_i}+e^{-i\vp_j}}{4}.
\end{split}
\end{equation}
\end{thm}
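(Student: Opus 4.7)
The plan is to directly Taylor expand $M^\alpha(t)$ in the modulation amplitude $\varepsilon$ using the explicit definition of $M^\alpha$ from Theorem \ref{thm:pre}, read off the first-order term $M_1(t)$, and then extract its Fourier coefficients. Throughout, I will use that the $\rho_i$ are constants (so they can be normalized and absorbed without changing the structure) and that $\kappa_i(t)=(1+\varepsilon\cos(\Omega t+\varphi_i))^{-1}$ so that $\kappa_i=1+O(\varepsilon)$ and all $\varepsilon=0$ derivatives of $\kappa_i$ vanish.

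First I would expand the three factors. Writing $c_i(t):=\cos(\Omega t+\varphi_i)$ and using $\kappa_i(t)=1-\varepsilon c_i(t)+O(\varepsilon^2)$, one gets $\sqrt{\kappa_i(t)}=1-\tfrac{\varepsilon}{2}c_i(t)+O(\varepsilon^2)$. Substituting into the definitions of $W_1,W_2$ in Theorem \ref{thm:pre} (with $\rho_i$ constant absorbed into $\kappa_r$ and $|D_i|$), the product $\tfrac{\delta\kappa_r}{\rho_r}W_1(t)C^\alpha W_2(t)$ acquires a factor $\sqrt{\kappa_i(t)\kappa_j(t)}=1-\tfrac{\varepsilon}{2}\bigl(c_i(t)+c_j(t)\bigr)+O(\varepsilon^2)$ multiplying the $(i,j)$ entry of the unmodulated matrix $L_{ij}:=(M_0)_{ij}$. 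Therefore the first-order contribution from this piece is
\begin{equation*}
-\tfrac{1}{2}L_{ij}\bigl(c_i(t)+c_j(t)\bigr),
\end{equation*}
which for $i=j$ reduces to $-L_{ii}c_i(t)$.

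Next I would compute $W_3$ to first order. Direct differentiation gives $\dot\kappa_i=-\varepsilon\dot c_i\kappa_i^2$, hence $\dot\kappa_i/\kappa_i^{3/2}=-\varepsilon\dot c_i\sqrt{\kappa_i}=-\varepsilon\dot c_i+O(\varepsilon^2)$, and differentiating once more yields $\tfrac{d}{dt}\bigl(\dot\kappa_i/\kappa_i^{3/2}\bigr)=-\varepsilon\ddot c_i+O(\varepsilon^2)=\varepsilon\Omega^2 c_i+O(\varepsilon^2)$. Multiplying by $\sqrt{\kappa_i}/2=\tfrac{1}{2}+O(\varepsilon)$ gives $(W_3)_{ii}=\tfrac{\varepsilon\Omega^2}{2}c_i(t)+O(\varepsilon^2)$. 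Summing the two contributions, the first-order matrix reads
\begin{equation*}
(M_1)_{ii}(t)=\Bigl(\tfrac{\Omega^2}{2}-L_{ii}\Bigr)c_i(t),\qquad (M_1)_{ij}(t)=-\tfrac{1}{2}L_{ij}\bigl(c_i(t)+c_j(t)\bigr)\ \ (i\neq j).
\end{equation*}

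Finally, since each $c_i(t)=\tfrac{1}{2}\bigl(e^{\iu\varphi_i}e^{\iu\Omega t}+e^{-\iu\varphi_i}e^{-\iu\Omega t}\bigr)$, the only non-zero Fourier modes of $M_1$ are at $m=\pm1$, and substituting these expressions yields exactly the stated formulas for $(M_1^{(\pm 1)})_{ij}$ and $(M_1^{(\pm 1)})_{ii}$. The computation is essentially mechanical; the only place requiring care is the $W_3$ term, where one must carry out two derivatives and observe that the $\sqrt{\kappa_i}$ prefactor contributes only at $O(\varepsilon^2)$, so the leading-order contribution is simply $\tfrac{\varepsilon\Omega^2}{2}c_i(t)$. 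No other subtlety arises, so I expect no genuine obstacle beyond bookkeeping.
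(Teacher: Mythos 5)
Your proposal is correct and follows essentially the same route as the paper: a direct first-order Taylor expansion of the entries of $M^\alpha(t)=\frac{\delta\kappa_r}{\rho_r}W_1C^\alpha W_2+W_3$ in $\varepsilon$, giving $-\tfrac12 L_{ij}(c_i+c_j)$ from the capacitance part and $\tfrac{\Omega^2}{2}c_i$ from $W_3$, followed by reading off the $m=\pm1$ Fourier coefficients. The only cosmetic difference is that the paper first writes $(W_3)_{ii}$ in closed form before expanding, whereas you differentiate order by order; both yield $(W_3)_{ii}=\tfrac{\varepsilon\Omega^2}{2}\cos(\Omega t+\varphi_i)+O(\varepsilon^2)$.
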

\begin{proof}
In the case where only the $\kappa_i$'s are time modulated, $M$ is given by $M=K\widetilde C^\al+W_3$, where $K=\delta {\rho_r}/{\kk_r}$ is a constant and
\begin{align*}
    \widetilde{C}^\al_{ij}  & =\frac{C^\al_{ij}}{|D_i|}\frac{1}{\sqrt{(1+\eps \text{cos}(\Omega t+\vp_i))(1+\eps \text{cos}(\Omega t+\vp_j))}}\\& =\frac{C^\al_{ij}}{|D_i|}\left(1-\frac{1}{2}\eps\left(\text{cos}(\Omega t+\vp_i)+\text{cos}(\Omega t+\vp_j)\right)+O(\eps^2)\right),\\
     (W_3)_{ii} & =\frac{\Omega^2}{4}\left(1+\frac{\eps^2-1}{(1+\eps \text{cos}(\Omega t+\vp_i)^2}\right)=\frac{\Omega^2}{2}\eps \text{cos}(\Omega t+\vp_i)+O(\eps^2).
\end{align*}
Defining $L_{ij}:={KC^\al_{ij}}/{|D_i|}$, we get
\begin{equation}
    M_{ij}=
    \begin{cases}
        L_{ij}-\frac{\eps}{2}L_{ij}\left(\text{cos}(\Omega t+\vp_i)+\text{cos}(\Omega t+\vp_j)\right)+O(\eps^2),&i\neq j,\\
        \nm
        L_{ii}+\eps\left(\frac{\Omega^2}{2}-L_{ii}\right)\text{cos}(\Omega t+\vp_i) +O(\eps^2),&i=j.
    \end{cases}
\end{equation}

\noindent Writing $M(t)=M_0+M_1(t)\eps+O(\eps^2)$, we see that $(M_0)_{ij}=K {C_{ij}^\alpha}/{|D_i|}$ depends only on $\alpha$ and corresponds to the unmodulated case $\eps = 0$. 
\end{proof}
\begin{thm}
\label{thm:modulatingboth}
In the case where the $\rho_i$'s and the $\kk_i$'s are all time modulated and defined as follows: 
$$\kk_i(t):=\frac{1}{1+\eps \text{cos}(\Omega t+\vp_i)},\quad \rho_i(t):=\frac{1}{1+\eps \text{cos}(\Omega t+\phi_i)}, \quad i=1,\ldots, N, $$
the non-zero Fourier coefficients of $M_1$ are given by
\begin{align}
  \left( M_1^{(1)}\right)_{ii}&=\left(\frac{\Omega^2}{2}-L_{ii}\right)\frac{e^{i\vp_i}}{2}, 
  & \left( M_1^{(1)}\right)_{ij}&=L_{ij}\left(\frac{e^{i\phi_i}-e^{i\phi_j}}{2}+ \frac{e^{i\vp_i}+e^{i\vp_j}}{4}\right),\\
  \left( M_1^{(-1)}\right)_{ii}&=\left(\frac{\Omega^2}{2}-L_{ii}\right)\frac{e^{-i\vp_i}}{2},
  & \left( M_1^{(-1)}\right)_{ij}&=L_{ij}\left(\frac{e^{-i\phi_i}-e^{-i\phi_j}}{2}+ \frac{e^{-i\vp_i}+e^{-i\vp_j}}{4}\right),
\end{align}
where $L_{ij}:={KC^\al_{ij}}/{|D_i|}$.
\end{thm}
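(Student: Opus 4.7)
The plan is to follow the same route as the preceding theorem, with the extra ingredient that the $\rho_i$ factor in $W_1$ and the $1/\rho_j$ factor in $W_2$ now carry $\varepsilon$-dependence. Starting from the formula $M^\alpha = (\delta\kappa_r/\rho_r)\,W_1 C^\alpha W_2 + W_3$ of Theorem \ref{thm:pre}, I would write
\begin{equation*}
(W_1 C^\alpha W_2)_{ij} = \frac{C^\alpha_{ij}}{|D_i|}\,\sqrt{\kappa_i(t)\,\kappa_j(t)}\,\frac{\rho_i(t)}{\rho_j(t)},
\end{equation*}
and note immediately that the extra $\rho$-factor is identically $1$ on the diagonal; this already predicts that the diagonal Fourier coefficients of $M_1$ coincide with those of the previous theorem.

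Next I would Taylor-expand each factor to first order in $\varepsilon$. Using $(1+\varepsilon\cos(\Omega t + \psi))^{-1/2} = 1 - \tfrac{\varepsilon}{2}\cos(\Omega t + \psi) + O(\varepsilon^2)$ gives
\begin{equation*}
\sqrt{\kappa_i(t)\,\kappa_j(t)} = 1 - \tfrac{\varepsilon}{2}\bigl[\cos(\Omega t + \varphi_i) + \cos(\Omega t + \varphi_j)\bigr] + O(\varepsilon^2),
\end{equation*}
while $\rho_i(t) = 1 - \varepsilon\cos(\Omega t + \phi_i) + O(\varepsilon^2)$ and $1/\rho_j(t) = 1 + \varepsilon\cos(\Omega t + \phi_j)$ combine to give $\rho_i/\rho_j = 1 + \varepsilon[\cos(\Omega t + \phi_j) - \cos(\Omega t + \phi_i)] + O(\varepsilon^2)$. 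Multiplying these two expansions produces the $O(\varepsilon)$ part of $(W_1 C^\alpha W_2)_{ij}$ as a linear combination of four cosines with prefactor $L_{ij}$; cross products of two cosines drop to $O(\varepsilon^2)$. Because $W_3$ depends only on $\kappa_i$, its first-order contribution $(\Omega^2/2)\cos(\Omega t + \varphi_i)$ on the diagonal is unchanged from the previous proof.

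Finally I would read off the Fourier coefficients using $\cos(\Omega t + \psi) = \tfrac{1}{2}(e^{i\psi}e^{i\Omega t} + e^{-i\psi}e^{-i\Omega t})$. The diagonal $\rho$-part cancels, so $(M_1)_{ii} = (\Omega^2/2 - L_{ii})\cos(\Omega t + \varphi_i)$ immediately gives the stated $(\Omega^2/2 - L_{ii})e^{\pm i\varphi_i}/2$ formulas. Off-diagonal, the $\kappa$-contribution reproduces the term $L_{ij}(e^{\pm i\varphi_i} + e^{\pm i\varphi_j})/4$ from the previous theorem, and the new $\rho$-contribution supplies the extra $L_{ij}(e^{\pm i\phi_j} - e^{\pm i\phi_i})/2$ term, giving the claimed formulas up to the overall sign convention for $L_{ij}$. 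The computation is essentially a routine Taylor expansion with no deep obstacle; the only pitfall is the harmonic bookkeeping, i.e.\ verifying that mixed products such as $\cos(\Omega t + \varphi_i)\cos(\Omega t + \phi_j)$ are correctly identified as $O(\varepsilon^2)$ so that no higher harmonics contribute and $M_1^{(m)} = 0$ for $|m| \neq 1$.
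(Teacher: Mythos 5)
Your route is the same as the paper's: write $M=K\widetilde{C}^\alpha+W_3$ with $(\widetilde{C}^\alpha)_{ij}=\frac{C^\alpha_{ij}}{|D_i|}\sqrt{\kappa_i\kappa_j}\,\rho_i/\rho_j$, Taylor-expand each factor to first order in $\varepsilon$, note that the $\rho$-ratio is identically $1$ on the diagonal and that mixed cosine products are $O(\varepsilon^2)$, and read off the $e^{\pm \iu\Omega t}$ coefficients. This is exactly the paper's argument, and your bookkeeping of which terms survive is correct.

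The one place you should not wave your hands is the sign of the density contribution. Your expansion $\rho_i/\rho_j=1+\varepsilon\left[\cos(\Omega t+\phi_j)-\cos(\Omega t+\phi_i)\right]+O(\varepsilon^2)$ is the correct consequence of $(W_1)_{ii}\propto\rho_i$ and $(W_2)_{jj}\propto 1/\rho_j$, and it yields $L_{ij}\,(e^{\pm \iu\phi_j}-e^{\pm \iu\phi_i})/2$, i.e., the opposite sign to the stated $L_{ij}\,(e^{\pm \iu\phi_i}-e^{\pm \iu\phi_j})/2$. Attributing this to ``the overall sign convention for $L_{ij}$'' does not resolve it: flipping the sign of $L_{ij}$ would equally flip the $\varphi$-terms, which you want to keep. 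The tension is actually present in the paper itself, whose proof defines the ratio as $\rho_i/\rho_j$ but then displays the first-order term as $+\cos(\Omega t+\phi_i)-\cos(\Omega t+\phi_j)$; likewise the stated theorem carries the $\kappa$-part with a $+$ sign, whereas both your expansion and the preceding theorem give $-L_{ij}\,(e^{\pm \iu\varphi_i}+e^{\pm \iu\varphi_j})/4$. So: same method, computation sound, but report the signs as you derived them rather than reverse-engineering them to match the claimed formula.
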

\begin{proof}
In this case, we have $M=K\widetilde C^\al+W_3$, where $K$ and $W_3$ are the same as before. Then, the following equation holds:
\begin{equation}
    \widetilde C^\al_{ij}=
    \begin{cases}
        \frac{C^\al_{ij}}{|D_i|}\sqrt{\kk_i(t)\kk_j(t)}\frac{\rho_i(t)}{\rho_j(t)},& i\neq j,\\
        \nm
        \frac{C^\al_{ii}}{|D_i|}\kk_i, & i=j.
    \end{cases}
\end{equation}
Hence, we obtain that
\begin{equation}
M_{i j}^{\alpha}= \begin{cases}L_{i j}+\varepsilon L_{i j}\left(\cos \left(\Omega t+\phi_{i}\right)-\cos \left(\Omega t+\phi_{j}\right)\right. & \\ -\frac{1}{2}\left(\cos \left(\Omega t+\varphi_{i}\right)+\cos \left(\Omega t+\varphi_{j}\right)\right)+O\left(\varepsilon^{2}\right), & i \neq j, \\ L_{i i}+\varepsilon\left(\frac{\Omega^{2}}{2}-L_{i i}\right) \cos \left(\Omega t+\varphi_{i}\right)+O\left(\varepsilon^{2}\right), & i=j .\end{cases}
\end{equation}
\end{proof}
\subsection{Numerical validation of the asymptotic formulas for the quasifrequencies emerging from a degenerate point}

The analytical results above can be verified using numerical simulations. We first choose the structure to be an infinite chain of trimers. In concrete, the resonators are modeled as disks of radius $0.1$, placed in a one dimensional lattice with period $L$ as shown in Figure \ref{figtrimer}.

\begin{figure}[H]
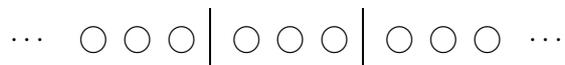

$$
\begin{array}{cc|c|cc}
\cdots & \;\bigcirc \; \bigcirc \; \bigcirc\; & \;\bigcirc \; \bigcirc \; \bigcirc\; &\; \bigcirc \; \bigcirc \; \bigcirc \;& \cdots\\[0.2em]
\end{array}
$$
\caption{A chain of trimers.} \label{figtrimer}
\end{figure}
The band structure of the chain of trimers depends on the modulation frequency $\Omega$. It exhibits degenerate points in the static case due to the folding in the time Brillouin zone. This is depicted in Figure \ref{fig:static_omega}. We then fix a positive degenerate point $\alpha=\alpha_\text{deg}$ and turn on the time modulations to observe the emergence of the quasifrequencies $\omega_1$ and $\omega_2$ from the degenerate point. Their real and the imaginary parts are plotted against the modulation amplitudes $\varepsilon_\rho$ and $\varepsilon_\kappa$. We then use polynomial interpolation on the numerical data to obtain the first-order perturbation coefficients for the case where we time modulate $\rho$ only, $\kappa$ only and both of them. This is then compared with the analytical results proven in Theorem \ref{thm:modulatingboth} in Table \ref{tbl:compare}.
\begin{table}[H]
\begin{center}
\begin{tabular}{|c!{\vrule width 1.5pt}C|C|C!{\vrule width 1.5pt}C|C|C|}
\hline
    \multicolumn{1}{|c!{\vrule width 1.5pt}}{$\Omega=0.3,\;\alpha_{deg}=2.395$}& \multicolumn{3}{|c!{\vrule width 1.5pt}}{$f_1$: analytical result} & \multicolumn{3}{|c|}{$f_1$: numerical result}\\
    \hline
    Quasifrequencies & \rho & \kappa & \text{both} & \rho & \kappa & \text{both} \\
    \wline{1.5pt}
    \makecell{$\omega_{1,2}=0.12+0\mathrm{i}$} & \pm 0.0096 & \pm 0.0116\mathrm{i}&\pm0.0064\mathrm{i}& \pm 0.0096 & \pm 0.0116\mathrm{i}&\pm0.0064\mathrm{i}\\
    \wline{1.5pt}

\end{tabular}
\end{center}
\caption{The numerical results agree with the results predicted by the proven asymptotic formulas.}
\label{tbl:compare}
\end{table}
\begin{figure}[H]
\centering
\begin{subfigure}[b]{.45\textwidth}
  \centering
  \includegraphics[width=\linewidth]{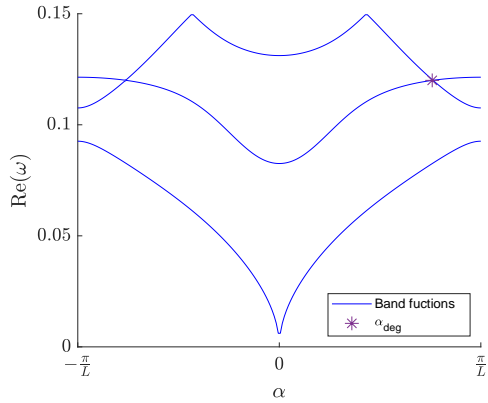}
  \caption{Band functions of the static trimers chain with $\Omega=0.3$, exhibiting degenerate points.}
  \label{fig:static_omega}
 \end{subfigure}
 \hspace{0.5cm}
\begin{subfigure}[b]{.45\textwidth}
  \centering
  \includegraphics[width=\linewidth]{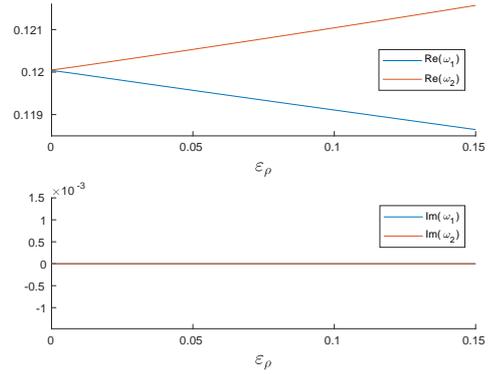}
  \caption{Quasifrequencies at $\alpha_\text{deg}$ when only modulating $\rho$. Here, $\varepsilon_\rho$ denotes the modulation amplitude.} 
 \end{subfigure}
\begin{subfigure}[b]{.45\textwidth}
  \centering
  \includegraphics[width=\linewidth]{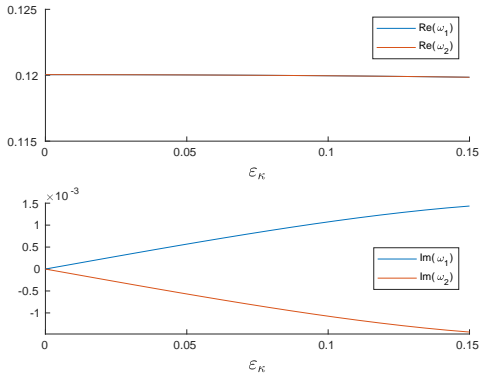}
  \caption{Quasifrequencies at $\alpha_\text{deg}$ when only modulating $\kappa$. Here, $\varepsilon_\kappa$ denotes the modulation amplitude.} 
 \end{subfigure}
 \hspace{0.5cm}
 \begin{subfigure}[b]{.45\textwidth}
  \centering
  \includegraphics[width=\linewidth]{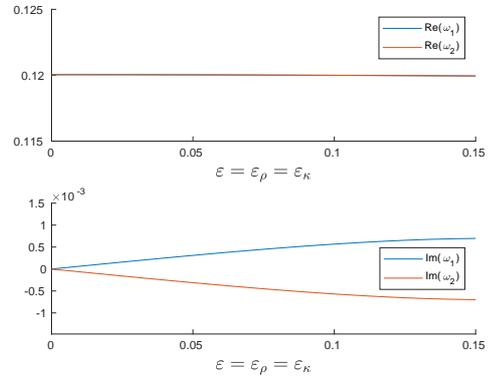}
  \caption{Quasifrequencies at $\alpha_\text{deg}$ when modulating both $\rho$ and $\kappa$.} 
 \end{subfigure}
\caption{Asymptotic analysis of the quasifrequencies of the trimer chain under 
time modulation at the degenerate point. The modulation frequency is $\Omega=0.3$.} 
\label{fig:modulation_formula}
\end{figure}

\subsection{Transmission properties under time modulations}
Our main result in this paper is the following theorem. 
\begin{thm}
\label{thm:purelyima}
Assume all the resonators are of the same size. If only one of the the material parameters ($\kappa$ or $\rho$) is modulated and the other one remains constant in time, then the first-order perturbation in the quasifrequencies at a degenerate point is either real or purely imaginary. 
\end{thm}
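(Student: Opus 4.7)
The plan is to reduce the claim to showing that $(F_1)_{12}(F_1)_{21}$, the radicand inside Corollary~\ref{example_f1}, is a real number. Both modulations are of the form $1/(1+\varepsilon\cos(\Omega t+\cdot))$ and hence $M_1^{(0)}=0$, so Corollary~\ref{example_f1} applies and gives $f=f_0\pm\varepsilon\sqrt{(F_1)_{12}(F_1)_{21}}+O(\varepsilon^2)$. The static matrix $M_0 = KC^\alpha/|D|$ is Hermitian and positive definite (Proposition~\ref{thm:pos}), so $f_0$ is purely imaginary; writing $\omega=-if$, the first-order correction $-i\varepsilon\sqrt{(F_1)_{12}(F_1)_{21}}$ is real exactly when the radicand is non-positive and purely imaginary exactly when it is non-negative. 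Thus the theorem reduces to showing that this radicand is real.

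The key structural input, valid only under the equal-volume hypothesis $|D_i|=|D|$ for all $i$, is a Hermitian-type symmetry of $M_1(t)$. Set $L:=KC^\alpha/|D|$, which is Hermitian since $C^\alpha$ is. If only $\kappa$ is modulated, then $\rho_i\equiv\rho_r$ and $(W_1C^\alpha W_2)_{ij}=\sqrt{\kappa_i\kappa_j}\,L_{ij}/K$ is Hermitian for every $t$; combined with the real diagonal $W_3$ this gives $M(t)=M(t)^*$, whence $M_1^{(-p)}=(M_1^{(p)})^*$ for every $p\in\Z$. If only $\rho$ is modulated, then $W_3\equiv 0$, $\kappa_i\equiv\kappa_r$, and $(W_1C^\alpha W_2)_{ij}=(\kappa_r/|D|)(\rho_i/\rho_j)C^\alpha_{ij}$; expanding $\rho_i/\rho_j$ to first order in $\varepsilon$ exhibits $M_1(t)$ as a commutator $[D(t),\kappa_r L]$ with real diagonal $D(t)=\mathrm{diag}(\cos(\Omega t+\phi_i))$, and a commutator of two Hermitian matrices is anti-Hermitian, so $M_1^{(-p)}=-(M_1^{(p)})^*$. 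The equal-volume hypothesis is essential here: without it, the $1/|D_i|$ factor in $(W_1)_{ii}$ destroys the Hermitian sandwich.

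To pass from $M_1$ to $F_1$, I would diagonalize $A_0$ in two stages. First, apply the symplectic similarity $J:=\mathrm{diag}(I_N,M_0^{-1/2})$, well-defined by positivity of $M_0$, to turn $A_0$ into the anti-Hermitian matrix $\tilde A_0=JA_0J^{-1}=\bigl(\begin{smallmatrix}0 & M_0^{1/2}\\ -M_0^{1/2} & 0\end{smallmatrix}\bigr)$. Second, with $M_0=\tilde V\Lambda\tilde V^*$ the spectral decomposition, diagonalize $\tilde A_0$ by the unitary $U:=\tfrac{1}{\sqrt 2}\bigl(\begin{smallmatrix}\tilde V & \tilde V\\ i\tilde V & -i\tilde V\end{smallmatrix}\bigr)$, yielding $U^*\tilde A_0U=\mathrm{diag}(i\Lambda^{1/2},-i\Lambda^{1/2})$. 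A direct calculation then gives
\begin{equation*}
U^*JA_1J^{-1}U \;=\; \frac{i}{2}\begin{pmatrix} C(t) & C(t)\\ -C(t) & -C(t)\end{pmatrix},\qquad C(t):=\Lambda^{-1/2}\tilde V^*M_1(t)\tilde V,
\end{equation*}
so by Theorem~\ref{thm:floquet_matrix_elements} the two relevant entries of $F_1$ in the degenerate block are of the form $\pm\tfrac{i}{2}C^{(p)}_{lk}$ and $\pm\tfrac{i}{2}C^{(-p)}_{kl}$ for $p=m_l-m_k$. Their product equals $\pm\tfrac14(\lambda_k\lambda_l)^{-1/2}(v_l^*M_1^{(p)}v_k)(v_k^*M_1^{(-p)}v_l)$; the symmetry $M_1^{(-p)}=\pm(M_1^{(p)})^*$ turns the second factor into $\pm\overline{v_l^*M_1^{(p)}v_k}$, so the product becomes $\pm\tfrac14(\lambda_k\lambda_l)^{-1/2}|v_l^*M_1^{(p)}v_k|^2\in\R$, which is exactly what was needed.

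The main obstacle I expect is the identification carried out in the second paragraph: in the $\rho$-only branch $M(t)$ itself is not anti-Hermitian --- only its first-order term is --- so the commutator form has to be extracted by hand from the Taylor expansion of $\rho_i/\rho_j$, and the equal-volume assumption, which enters at every step, is precisely what makes it plausible that the conclusion should fail for resonators of differing size.
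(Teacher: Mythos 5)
Your proposal is correct and follows essentially the same route as the paper's proof: reduce the claim to the reality of the radicand $(F_1)_{12}(F_1)_{21}$, exploit the symmetry $M_1^{(-p)}=\pm(M_1^{(p)})^*$ (Hermitian for $\kappa$-only, anti-Hermitian for $\rho$-only), diagonalize $A_0$ via a block similarity built from the unitary diagonalization of $M_0$, and observe that the product of the two off-diagonal $F_1$ entries collapses to a real multiple of $|v_l^*M_1^{(p)}v_k|^2$, with the two sign cases corresponding to whether the degenerate eigenvalues lie in the same or in opposite blocks of $A_0$. The only difference is cosmetic: you factor the similarity transformation into a symplectic rescaling followed by a unitary, whereas the paper writes the combined matrix $\widetilde S$ directly.
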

\begin{proof}



In the case where all the resonators have the same size, we have $M_0:=M_0^\alpha=kC^\al$ where $k>0$ is a constant and $C^\alpha$ is the capacitance matrix. Since $C^\al$ is Hermitian and positive definite (see Proposition \ref{thm:pos}), all the eigenvalues of $M_0$ are strictly positive and $M_0$ is unitarily diagonalizable. Let $$\ds \widetilde{A_0}:=\begin{pmatrix}0 &I_N\\-M_0&0\end{pmatrix}$$ be the $2N\times 2N$ square matrix and let $A_0=\widetilde S^{-1}\widetilde A_0 \widetilde S$ be its diagonal form. We observe that $\lambda$ is an eigenvalue of $\widetilde{A_0}$ if and only if $\mu:=-\lambda^2$ is an eigenvalue of $M_0$. Let $\lambda_1,\dots,\lambda_N$ be the eigenvalues of $\widetilde A_0$ of the form $\sqrt\mu \,\iu\in\iu\R_{>0}$. Denote by $D:=${\rm diag}$(\lambda_1,\ldots,\lambda_N)$ and $S$ the unitary matrix whose columns are eigenvectors of $M_0$. Then $S^{-1}M_0S=-D^2$ and one can check that with $\ds \widetilde S:=\begin{pmatrix} S&-S\\SD&SD\\\end{pmatrix}$, $A_0$ is diagonal:
$$\ds A_0:=\widetilde S^{-1} \widetilde A_0\widetilde S=\begin{pmatrix} D&0\\0&-D\end{pmatrix}.$$
Moreover, we have that
$$\ds \widetilde S^{-1}=\frac{1}{2}
\begin{pmatrix}
S^{-1}&D^{-1}S^{-1}\\-S^{-1}&D^{-1}S^{-1}
\end{pmatrix}.
$$
Since $A_1^{(m)}=0$ if $m\neq\pm 1$,  it suffices to consider $A_1^{(\pm 1)}$. We know that
$$A_1^{(\pm 1)}:=\widetilde S^{-1}\widetilde A_1^{(\pm 1)}\widetilde S=\frac{1}{2}\begin{pmatrix}
D^{-1}B^{(\pm 1)}&-D^{-1}B^{(\pm 1)}\\D^{-1}B^{(\pm 1)}&-D^{-1}B^{(\pm 1)}
\end{pmatrix},$$ where $B^{(\pm 1)}:=S^{-1}M_1^{(\pm 1)}S$. 
When only modulating $\rho$, we have that $M^{(-1)}=-\overline{M^{(1)}}^T$ and so  $B^{(-1)}=-\overline{B^{(1)}}^T$; while when modulating only $\kappa$, $M^{(-1)}=\overline{M^{(1)}}^T$ and  $B^{(-1)}=\overline{B^{(1)}}^T$. Hence, two cases arise: 
\par\noindent
\textbf{Case 1}: $(f_0)_{i}=(f_0)_{j}$ for some $1\leq i,j\leq N$ (or analogously $N\leq i,j\leq 2N$), i.e., $\exists \; i,j\leq N$ such that the corresponding eigenvalues of $A_0$ satisfy $\lambda_i\cong \lambda_j$ mod $\Omega$ . Then

$$(A_1^{(1)})_{ij}(A_1^{(-1)})_{ji}=\underbrace{\lambda_i^{-1}\lambda_j^{-1}}_{<0}B^{( 1)}_{ij}B^{(-1)}_{ji}=
\begin{cases}
    -\lambda_i^{-1}\lambda_j^{-1}|B^{(1)}_{ij}|^2>0 & \text{ when modulating } \rho ,\\
    \nm
    +\lambda_i^{-1}\lambda_j^{-1}|B^{(1)}_{ij}|^2<0 & \text{ when modulating } \kappa ;
\end{cases}$$

\noindent
\textbf{Case 2}: \label{case 2} $(f_0)_{i}=(f_0)_{j'}$ for some $1\leq i,j\leq N$ and $j'=N+j$, i.e., $\lambda_i=-\lambda_j$ mod $\Omega$.

$$(A_1^{(1)})_{ij'}(A_1^{(-1)})_{j'i}=-\underbrace{\lambda_i^{-1}\lambda_j^{-1}}_{<0}B^{(1)}_{ij}B^{(-1)}_{ji}=
\begin{cases}
    +\lambda_i^{-1}\lambda_j^{-1}|B^{(1)}_{ij}|^2<0 & \text{ when modulating } \rho ,\\
    \nm
    -\lambda_i^{-1}\lambda_j^{-1}|B^{(1)}_{ij}|^2>0 & \text{ when modulating } \kappa .
\end{cases}$$
We conclude the proof by inserting the different cases into the formulas for the first-order perturbation given by Theorem \ref{thm:floquet_matrix_elements} and Corollary \ref{example_f1}.
\end{proof}
\begin{rem}
When modulating both $\rho$ and $\kappa$, the expressions for $M_1^{(\pm 1)}$ and $B^{(\pm 1)}$ can be obtained from the expressions above. Denoting the formulas for modulating only one parameter by $B_\rho^{(\pm 1)}$ and $B_\kappa^{(\pm 1)}$, we can write
\begin{align*}
B_{ij}^{(1)}B_{ji}^{(-1)}
&=
\left(
(B_\rho^{(1)})_{ij}+(B_\kappa^{(1)})_{ij}\right)
\left(
(B_\rho^{(-1)})_{ji}+(B_\kappa^{(-1)})_{ji}\right)\\
&=
\left(
(B_\rho^{(1)})_{ij}+(B_\kappa^{(1)})_{ij}\right)
\left(
-(\overline{B_\rho^{(1)}})_{ij}+(\overline{B_\kappa^{(1)}})_{ij}\right)\\
&=
\left| (B_\kappa^{(1)})_{ij} \right|^2 -\left| (B_\rho^{(1)})_{ij}\right|^2+2\text{ Im} (z) \iu,
\end{align*}
where 
$z:=(\overline{B_\kappa^{(1)}})_{ij}(B_\rho^{(1)})_{ij}$. Numerical simulations show that the order of Im$(z)$ is considerably ($10^{-4}$ times) smaller than the term $| (B_\kappa^{(1)})_{ij} |^2 -| (B_\rho^{(1)})_{ij}|^2$.
\end{rem}
\begin{rem}
\label{rem:main}
In the high-contrast regime, we verify numerically that only the second case as discussed in the proof of Theorem \ref{thm:purelyima} occurs. Indeed, since $M_0=\delta ({\rho_r}/{\kappa_r}) \, \mathrm{Id}_N$ and $\delta\ll1$, the eigenvalues $\mu$ of $M_0$ are small enough so that $\text{ Im}(\lambda)=\sqrt{\mu}$ never exceed $\Omega$. Hence, when $A_0$ gets folded into $F_0$, an eigenvalue $\lambda$ with positive imaginary part either stays the same or gets folded into an eigenvalue with negative imaginary part and vice-versa. Thus, we can  conclude that modulating $\rho$ always yields imaginary $f_1$, leading to real perturbation in the quasifrequencies and modulating $\kappa$ always results in purely imaginary perturbations  in the quasifrequencies (at leading-order in the modulation amplitude $\varepsilon$).
\end{rem}
The following theorem is proved in \cite{jinghaothesis}.
\begin{thm}
\label{thm:bandgap}
If only the material parameters $\rho_i$'s are time modulated, then there is a non-reciprocal band gap opening around the degenerate point.
\end{thm}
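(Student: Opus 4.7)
The plan is to combine Theorem~\ref{thm:purelyima} (together with Remark~\ref{rem:main}) and the explicit first-order formulas from Section~\ref{sec3} with the band-symmetry identity of Lemma~\ref{bandsym}. First, to establish that a gap actually opens, I would apply the $\rho$-only case of Theorem~\ref{thm:purelyima}: the high-contrast regime forces every degeneracy to fall into Case~2 of its proof, so $(A_1^{(1)})_{ij'}(A_1^{(-1)})_{j'i}<0$, and Corollary~\ref{example_f1} then yields a purely imaginary first-order Floquet exponent correction $f_1$. Equivalently, $\omega_1 = f_1/\iu$ is real, so the two branches emerging from the degenerate point $(\alpha_{\text{deg}},\omega_0)$ take the form $\omega_0 \pm \varepsilon\,|c(\alpha_{\text{deg}})| + O(\varepsilon^2)$ and a band gap of width $2\varepsilon\,|c(\alpha_{\text{deg}})| + O(\varepsilon^2)$ centred at $\omega_0$ opens.

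To establish non-reciprocity, I would compare $|c(\alpha_{\text{deg}})|$ with $|c(-\alpha_{\text{deg}})|$. Setting the $\varphi_i$'s to zero in the proof of Theorem~\ref{thm:modulatingboth} gives $M_1^{(1),\alpha} = (K/(2|D|))[\Phi, C^\alpha]$ with $\Phi:=\mathrm{diag}(e^{\iu\phi_i})$. Diagonalising $C^\alpha$ by a unitary $S^\alpha$ and writing $\widetilde\Phi^\alpha:=(S^\alpha)^*\Phi S^\alpha$, a direct computation yields $B^{(1),\alpha}_{kl} = (K/(2|D|))(\mu_l-\mu_k)(\widetilde\Phi^\alpha)_{kl}$, so that Corollary~\ref{example_f1} gives
\[
|c(\alpha)|^2 \;=\; \mathrm{const}\cdot\frac{(\mu_i-\mu_j)^2}{|\lambda_i\lambda_j|}\,|(\widetilde\Phi^\alpha)_{ij}|^2,
\]
where $(i,j)$ labels the degenerate pair. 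The identity $C^{-\alpha}=\overline{C^\alpha}$ lets us choose $S^{-\alpha}=\overline{S^\alpha}$, and a short calculation then gives $\widetilde\Phi^{-\alpha} = (\widetilde\Phi^\alpha)^T$, so $|c(-\alpha_{\text{deg}})|^2$ contains $|(\widetilde\Phi^\alpha)_{ji}|^2$ in place of $|(\widetilde\Phi^\alpha)_{ij}|^2$. As soon as the phase matrix $\Phi$ is non-Hermitian (i.e.\ some $\phi_i$ is not an integer multiple of $\pi$), $\widetilde\Phi^\alpha$ is non-Hermitian too, so generically $|(\widetilde\Phi^\alpha)_{ij}|\neq|(\widetilde\Phi^\alpha)_{ji}|$, giving $|c(\alpha_{\text{deg}})|\neq|c(-\alpha_{\text{deg}})|$.

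Combining this inequality with Lemma~\ref{bandsym} shows that the quasifrequency set at $\alpha_{\text{deg}}$ near $\omega_0$ (a gap of width $2\varepsilon|c(\alpha_{\text{deg}})|$) does not coincide with the one at $-\alpha_{\text{deg}}$ (width $2\varepsilon|c(-\alpha_{\text{deg}})|$), which is precisely the failure of reciprocity. The main obstacle will be ensuring that the generic inequality $|(\widetilde\Phi^\alpha)_{ij}|\neq|(\widetilde\Phi^\alpha)_{ji}|$ does not accidentally fail for the particular $(i,j)$ realising the degeneracy: this requires a mild non-degeneracy assumption on both the phase configuration and the eigenvectors of $C^\alpha$, which is automatic for the rotating modulations $\phi_i = 2\pi(i-1)/N$ used in the numerical examples and can be read off directly from the experiments illustrated in Figure~\ref{fig:modulation_formula}.
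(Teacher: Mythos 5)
You should first note that the paper does not actually supply a proof of Theorem~\ref{thm:bandgap}: it is quoted from \cite{jinghaothesis}, so there is no in-paper argument to match yours against. Judged on its own terms, your strategy is structurally aligned with the machinery the paper does develop: the gap opening follows from the $\rho$-only branch of Theorem~\ref{thm:purelyima} together with Remark~\ref{rem:main} and Corollary~\ref{example_f1} (purely imaginary $f_1$, hence real $\omega_1$ and an avoided crossing of width $2\varepsilon|\omega_1|$), and your commutator identity $M_1^{(1)}\propto[\Phi,C^\alpha]$, $B^{(1)}_{kl}\propto(\mu_l-\mu_k)(\widetilde\Phi^\alpha)_{kl}$ is the correct form of the first-order data for $\rho$-only modulation. (A wording slip: to isolate the $\rho$-only case from Theorem~\ref{thm:modulatingboth} you must delete the $\kappa$-modulation terms, not ``set the $\varphi_i$'s to zero'' --- the latter leaves $\kappa_i(t)=1/(1+\varepsilon\cos\Omega t)$ still modulated.)

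The genuine gap is in the non-reciprocity step, which is the whole point of the theorem. You reduce it to $|(\widetilde\Phi^\alpha)_{ij}|\neq|(\widetilde\Phi^\alpha)_{ji}|$ and then declare this ``generic,'' deferring verification to Figure~\ref{fig:modulation_formula}; that is a numerical observation, not a proof. Worse, the claim is not merely unproven but false in some of the simplest configurations: $\widetilde\Phi^\alpha=(S^\alpha)^*\Phi S^\alpha$ is unitary, and for $N=2$ unitarity forces $|U_{12}|=|U_{21}|$, so for a dimer your mechanism produces \emph{no} first-order asymmetry at all. Hence the argument genuinely requires $N\geq3$ together with explicit conditions on the phases $\phi_i$ and on the eigenvectors of $C^\alpha$ at $\alpha_{\mathrm{deg}}$, and these must be verified for the stated class of structures rather than assumed. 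A second, smaller hole: your splitting coefficient carries the factor $(\mu_i-\mu_j)$, which vanishes when the degeneracy arises from a band folded onto its own reflection (i.e.\ $\omega_0\equiv 0$ or $\Omega/2 \bmod \Omega$); at such degenerate points the first-order gap is zero and the statement would need either a restriction to cross-band degeneracies or a second-order analysis. Finally, passing from ``the two branches separate by $2\varepsilon|\omega_1|$ at $\alpha_{\mathrm{deg}}$'' to ``a band gap opens'' uses that the two crossing folded bands have slopes of opposite sign, which you should state; combined with Lemma~\ref{bandsym} the comparison of the gap widths at $\pm\alpha_{\mathrm{deg}}$ then does yield non-reciprocity as you intend.
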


We now prove the following result. 
\begin{thm}
\label{thm:kgap}
If only the material parameters $\kappa_i$ are time modulated, then at a degenerate point with multiplicity $2$, one of the two Bloch modes is exponentially decaying and the other is exponentially increasing over time. The momentum gaps where waves exhibit this exponential behavior are called the k-gaps.
\end{thm}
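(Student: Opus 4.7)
The plan is to read off the statement directly from the asymptotic formula in Corollary~\ref{example_f1} combined with the sign analysis already carried out in the proof of Theorem~\ref{thm:purelyima}. Specifically, at a degenerate point $f_0$ of multiplicity $2$, Corollary~\ref{example_f1} gives the perturbed Floquet exponents
\begin{equation*}
f_{\pm} \;=\; f_0 \;\pm\; \varepsilon\sqrt{(F_1)_{12}(F_1)_{21}} \;+\; O(\varepsilon^2),
\end{equation*}
and when only $\kappa$ is modulated the proof of Theorem~\ref{thm:purelyima} shows that, in the high-contrast regime described in Remark~\ref{rem:main}, only Case~2 occurs, so that $(F_1)_{12}(F_1)_{21} = -\lambda_i^{-1}\lambda_j^{-1}|B^{(1)}_{ij}|^2 > 0$. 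Thus the square root is a (nonzero) real number, which I would denote $c(\alpha)>0$.

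Next, I would record that $f_0$ is a purely imaginary eigenvalue of $F_0$ (since $A_0$ is diagonal with purely imaginary entries $\lambda_j = \iu\sqrt{\mu_j}$ and $F_0$ differs from $A_0$ only by a real multiple of $\iu\Omega$, cf.~\eqref{F0}), so the two perturbed Floquet exponents have the form
\begin{equation*}
f_{\pm} \;=\; \iu\omega_0 \;\pm\; \varepsilon\,c(\alpha) \;+\; O(\varepsilon^2),
\qquad c(\alpha) \in \mathbb{R}_{>0}.
\end{equation*}
Equivalently, the quasifrequencies split as $\omega_{\pm} = \omega_0 \mp \iu\varepsilon c(\alpha) + O(\varepsilon^2)$, i.e.\ they leave the real axis symmetrically in opposite imaginary directions (the phenomenon numerically observed in Figure~\ref{fig:modulation_formula}(c)).

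Then I would apply Floquet's theorem (\ref{floquetthm}) to the system~\eqref{eq:1dsys}: the two Bloch solutions associated with $f_{\pm}$ are of the form
\begin{equation*}
y_{\pm}(t) \;=\; P(t)\, e^{f_{\pm} t}\, v_{\pm},
\end{equation*}
where $P(t)$ is $T$-periodic and $v_{\pm}$ are the corresponding eigenvectors of $F$. Taking norms and using that $|e^{\iu\omega_0 t}|=1$ gives
\begin{equation*}
\|y_{\pm}(t)\| \;\sim\; e^{\pm \varepsilon c(\alpha)\, t} \qquad \text{as } t\to \infty,
\end{equation*}
so one Bloch mode is exponentially growing and the other is exponentially decaying in $t$, which is the claimed behaviour.

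Finally I would interpret this as a k-gap: since at the real momentum $\alpha=\alpha_{\mathrm{deg}}$ the two quasifrequencies emanating from $\omega_0$ acquire non-zero imaginary parts $\mp \varepsilon c(\alpha_{\mathrm{deg}})$, by continuity of $\alpha\mapsto c(\alpha)$ this persists on an open neighbourhood of $\alpha_{\mathrm{deg}}$, in which no purely oscillating (temporally bounded) Bloch mode at frequency near $\omega_0$ exists; that neighbourhood is precisely the k-gap. The main technical point is really just the sign analysis that was already done in Theorem~\ref{thm:purelyima}, together with the observation from Remark~\ref{rem:main} that the degeneracies available in the high-contrast/subwavelength regime correspond to Case~2, so no further obstacle arises.
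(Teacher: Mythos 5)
Your proposal is correct and follows essentially the same route as the paper: the paper's own proof simply invokes Remark~\ref{rem:main} (i.e.\ the Case~2 sign analysis from Theorem~\ref{thm:purelyima}) to conclude that the first-order perturbations of the quasifrequencies are $\pm\omega_1$ with $\omega_1$ purely imaginary, and then reads off the exponential growth/decay from the quasiperiodicity of the Bloch solutions, exactly as you do via Floquet's theorem. Your version merely spells out the intermediate steps (Corollary~\ref{example_f1}, the positivity of $(F_1)_{12}(F_1)_{21}$, and the continuity argument giving an open momentum interval) in more detail than the paper's terse proof.
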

\begin{proof}
By arguments layed out in Remark \ref{rem:main}, the first-order perturbations of the quasifrequencies are given by $\pm\omega_1$, where $\omega_1\in \mathrm{i}\mathbb{R}$. This means that, the two subwavelength solutions in the case $u_\pm$ to \eqref{eq:wave_transf} satisfy
\begin{equation}
    u_\pm(x,t)e^{i(\omega_0\pm\varepsilon\omega_1)t} \ \text{is $T$-periodic},
\end{equation}
where $\omega_0\in\mathbb{R}$ and $\omega_1\in\mathrm{i}\mathbb{R}$. This implies that $u_+$ is exponentially increasing and $u_-$ is exponentially decreasing when $t\rightarrow + \infty$.
\end{proof}

\subsection{Numerical illustrations of non-reciprocal band gaps and k-gaps} 

The following figures illustrate numerically the results from Theorem \ref{thm:bandgap} and Theorem \ref{thm:kgap}. The occurrence of a bandgap signifies that waves with frequencies inside this band gap cannot propagate through the material and will decay exponentially. As shown in Figure \ref{fig:6}, there is a bandgap when modulating $\rho$ only, due to the fact that at the degenerate point both the constant term $\omega_0$ and the first-order term $\omega_1$ are real. In this case, the wave transmission is non-reciprocal as the size of the bandgap varies from left to right \cite{jinghaothesis}. However, there is no bandgap when modulating $\kappa$, since the first-order term is purely imaginary at the degenerate point. Nevertheless, there is a k-gap opening in this case and the structure can support exponentially growing waves with these quasiperiodicities. 

\begin{figure}[H]
\centering
\begin{subfigure}{.45\textwidth}
  \centering
  \includegraphics[width=\linewidth]{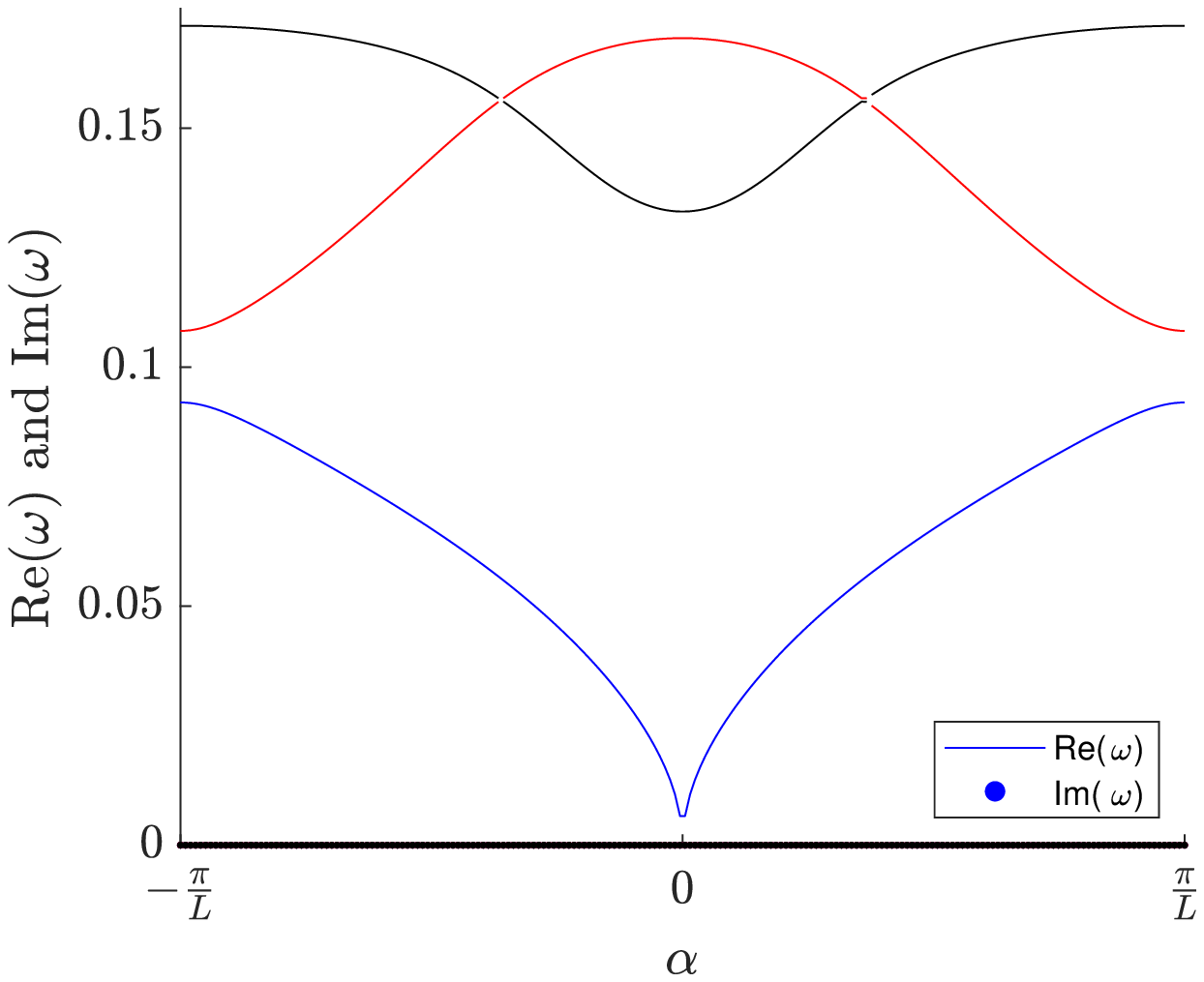}
  \caption{Degenerate points for the static case.}
 \end{subfigure}
 \hspace{0.5cm}
\begin{subfigure}{.45\textwidth}
  \centering
  \includegraphics[width=\linewidth]{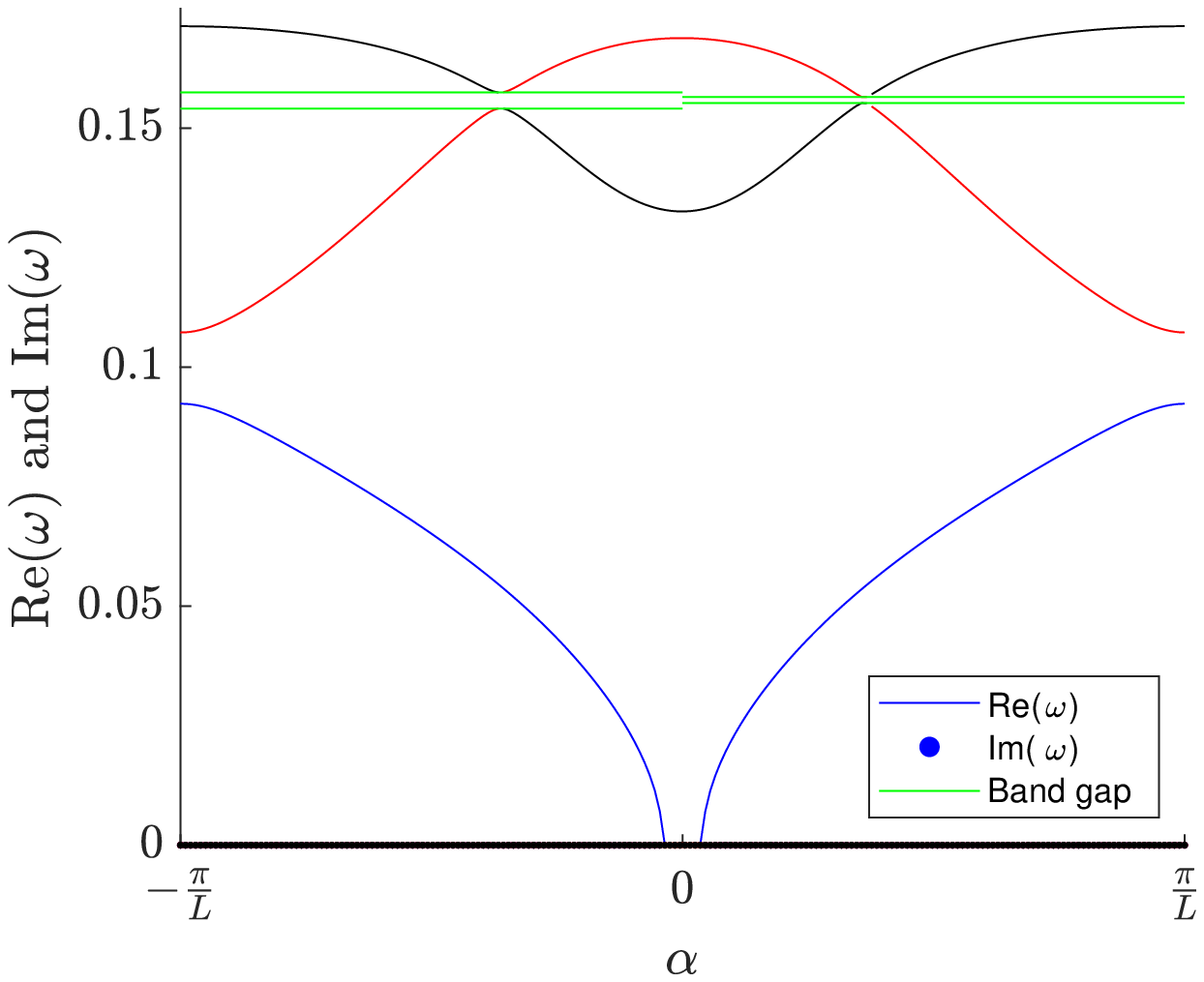}
  \caption{Appearance of band gaps by modulating $\rho$ only.} 
 \end{subfigure}
\\
\begin{subfigure}{.45\textwidth}
  \centering
  \includegraphics[width=\linewidth]{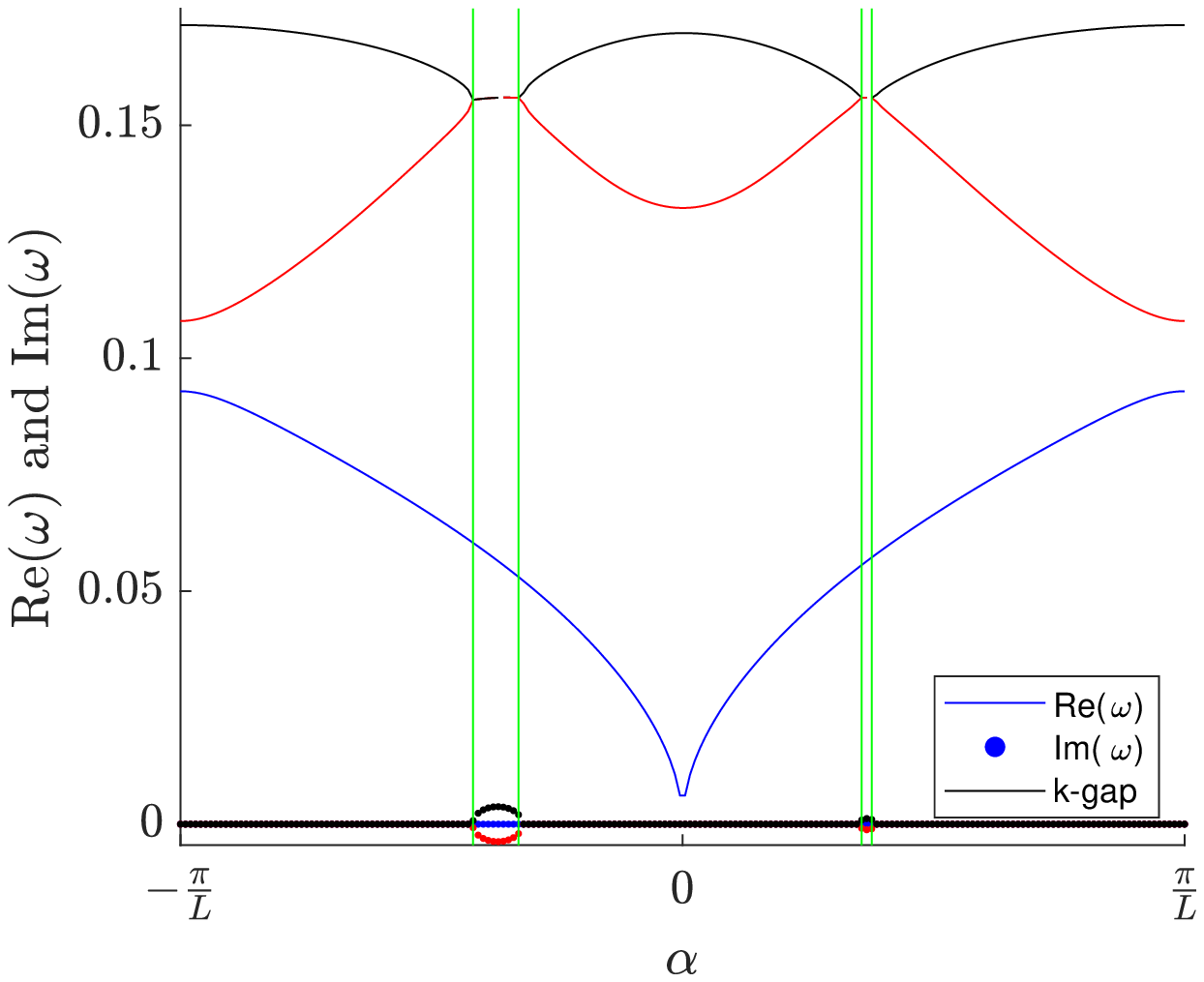}
  \caption{Appearance of k-gaps by modulating $\kappa$ only.} 
    \label{fig:kgap}
\end{subfigure}
\hspace{0.5cm}
\begin{subfigure}{.45\textwidth}
  \centering
  \includegraphics[width=\linewidth]{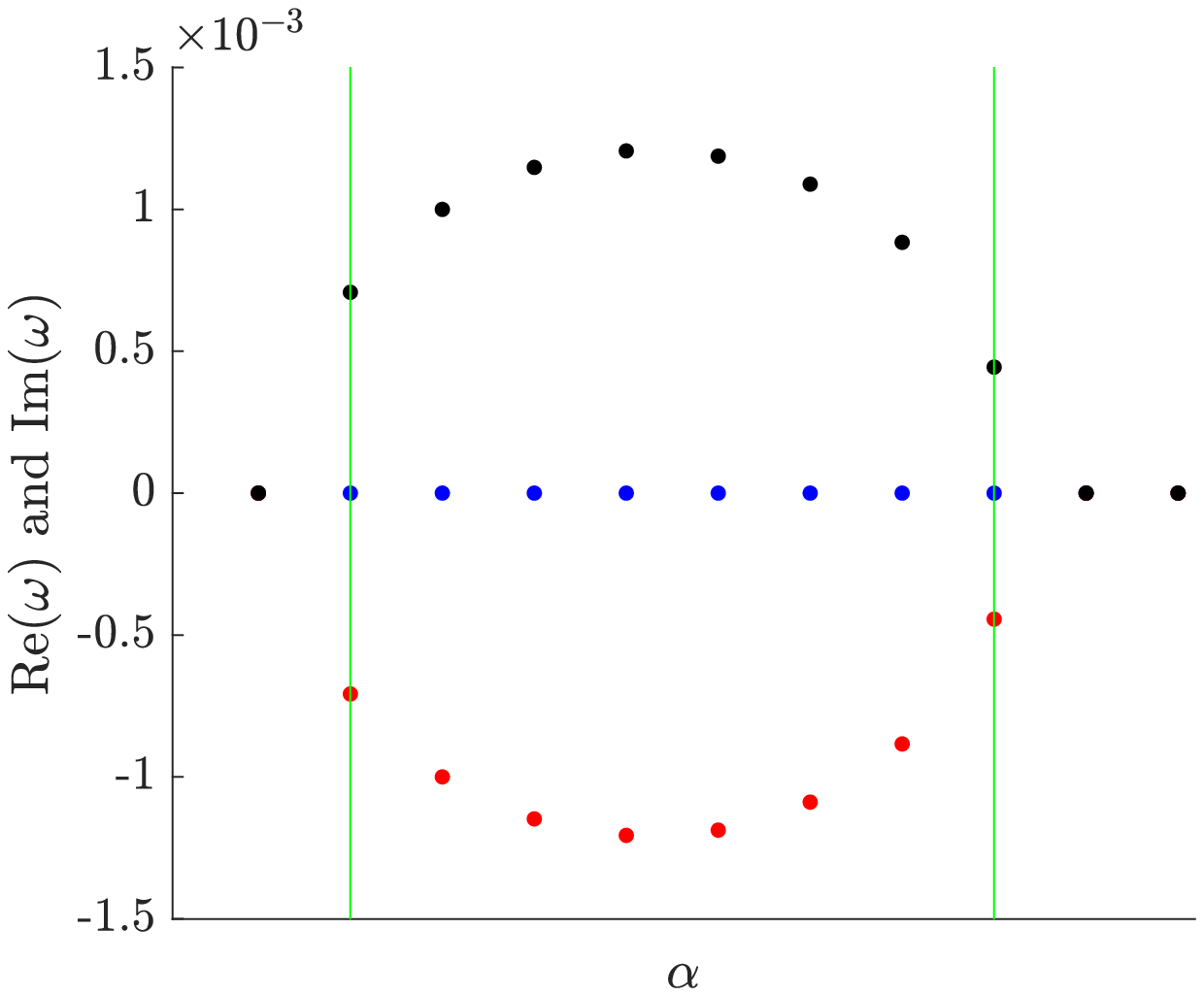}
  \caption{Zoom in of the second k-gap in Figure \ref{fig:kgap}.} 
\end{subfigure}
\caption{Transmission properties of a time modulated one-dimensional lattice of trimers. We set $\Omega=0.35$ and the amplitudes for the respective modulations to be $\varepsilon=0.1$.} 
\label{fig:6}
\end{figure}

\section{Numerical simulations of transmission properties in other structures}
\label{sec4}
In the previous section, we have explained the fundamental reasons for the broken reciprocity and the appearance of k-gaps in the band structure of a chain of trimers.  We have analysed the perturbations of the Floquet exponents emerging from  degenerate points of the static folded chain of trimers asymptotically in the modulation amplitude. In this section, we present numerical examples for other time modulated structures. In \citep{jinghaothesis}, the non-reciprocity property in different structures has been discussed. Therefore,  we focus here on the k-gaps resulting from modulating $\kappa$ only.

\subsection{Square lattice}
We begin by considering resonators in a 2-dimensional square lattice defined through the lattice vectors
		\begin{equation}l_1 = \begin{pmatrix}1\\0\end{pmatrix}, \quad l_2 = \begin{pmatrix}0\\1\end{pmatrix}.\end{equation}
		The lattice and the corresponding Brillouin zone are illustrated in \Cref{fig:square}. The symmetry points in $Y^*$ are given by $\Gamma = (0,0), \ \text{M} = (\pi,\pi)$ and $\text{X}=(\pi,0)$.
\begin{figure}[H]
	\begin{subfigure}[b]{0.5\linewidth}
				\centering
				\begin{tikzpicture}[scale=1.2]
					\begin{scope}[xshift=-5cm,scale=1]
						\pgfmathsetmacro{\r}{0.1pt} 
						\coordinate (a) at (1,0);		
						\coordinate (b) at (0,1);
						
						\draw[opacity=0.2] (0,0) -- (1,0);
						\draw[opacity=0.2] (0,0) -- (0,1);
						\draw[fill=lightgray] (0.76,0.65) circle(\r); 
						\draw[fill=lightgray] (0.24,0.65) circle(\r);
						\draw[fill=lightgray] (0.5,0.2) circle(\r);
						
						\begin{scope}[shift = (a)]					
							\draw[opacity=0.2] (1,0) -- (1,1);
							\draw[opacity=0.2] (0,0) -- (1,0);
							\draw[opacity=0.2] (0,0) -- (0,1);
						\draw[fill=lightgray] (0.76,0.65) circle(\r); 
						\draw[fill=lightgray] (0.24,0.65) circle(\r);
						\draw[fill=lightgray] (0.5,0.2) circle(\r);
						\end{scope}
						\begin{scope}[shift = (b)]
							\draw[opacity=0.2] (1,1) -- (0,1);
							\draw[opacity=0.2] (0,0) -- (1,0);
							\draw[opacity=0.2] (0,0) -- (0,1);
						\draw[fill=lightgray] (0.76,0.65) circle(\r); 
						\draw[fill=lightgray] (0.24,0.65) circle(\r);
						\draw[fill=lightgray] (0.5,0.2) circle(\r);
						\end{scope}
						\begin{scope}[shift = ($-1*(a)$)]
							\draw[opacity=0.2] (0,0) -- (1,0);
							\draw[opacity=0.2] (0,0) -- (0,1);
						\draw[fill=lightgray] (0.76,0.65) circle(\r); 
						\draw[fill=lightgray] (0.24,0.65) circle(\r);
						\draw[fill=lightgray] (0.5,0.2) circle(\r);
						\end{scope}
						\begin{scope}[shift = ($-1*(b)$)]
							\draw[opacity=0.2] (0,0) -- (1,0);
							\draw[opacity=0.2] (0,0) -- (0,1);
						\draw[fill=lightgray] (0.76,0.65) circle(\r); 
						\draw[fill=lightgray] (0.24,0.65) circle(\r);
						\draw[fill=lightgray] (0.5,0.2) circle(\r);
						\end{scope}
						\begin{scope}[shift = ($(a)+(b)$)]
							\draw[opacity=0.2] (1,0) -- (1,1) -- (0,1);
							\draw[opacity=0.2] (0,0) -- (1,0);
							\draw[opacity=0.2] (0,0) -- (0,1);
						\draw[fill=lightgray] (0.76,0.65) circle(\r); 
						\draw[fill=lightgray] (0.24,0.65) circle(\r);
						\draw[fill=lightgray] (0.5,0.2) circle(\r);
						\end{scope}
						\begin{scope}[shift = ($-1*(a)-(b)$)]
							\draw[opacity=0.2] (0,0) -- (1,0);
							\draw[opacity=0.2] (0,0) -- (0,1);
						\draw[fill=lightgray] (0.76,0.65) circle(\r); 
						\draw[fill=lightgray] (0.24,0.65) circle(\r);
						\draw[fill=lightgray] (0.5,0.2) circle(\r);
						\end{scope}
						\begin{scope}[shift = ($(a)-(b)$)]
							\draw[opacity=0.2] (1,0) -- (1,1);
							\draw[opacity=0.2] (0,0) -- (1,0);
							\draw[opacity=0.2] (0,0) -- (0,1);
						\draw[fill=lightgray] (0.76,0.65) circle(\r); 
						\draw[fill=lightgray] (0.24,0.65) circle(\r);
						\draw[fill=lightgray] (0.5,0.2) circle(\r);
						\end{scope}
						\begin{scope}[shift = ($-1*(a)+(b)$)]
							\draw[opacity=0.2] (1,1) -- (0,1);
							\draw[opacity=0.2] (0,0) -- (1,0);
							\draw[opacity=0.2] (0,0) -- (0,1);
						\draw[fill=lightgray] (0.76,0.65) circle(\r); 
						\draw[fill=lightgray] (0.24,0.65) circle(\r);
						\draw[fill=lightgray] (0.5,0.2) circle(\r);
						\end{scope}
						\begin{scope}[shift = ($2*(a)$)]
							\draw (0.5,0.5) node[rotate=0]{$\cdots$};
						\end{scope}
						\begin{scope}[shift = ($-2*(a)$)]
							\draw (0.5,0.5) node[rotate=0]{$\cdots$};
						\end{scope}
						\begin{scope}[shift = ($2*(b)$)]
							\draw (0.5,0.3) node[rotate=90]{$\cdots$};
						\end{scope}
						\begin{scope}[shift = ($-2*(b)$)]
							\draw (0.5,0.7) node[rotate=90]{$\cdots$};
						\end{scope}
					\end{scope}
				\end{tikzpicture}
				\caption{Circular resonators in square lattice.}
			\end{subfigure}
			\begin{subfigure}[b]{0.5\linewidth}
				\centering
				\begin{tikzpicture}[scale=3]	
					\coordinate (a) at ({1/sqrt(3)},1);	
					\coordinate (b) at ({1/sqrt(3)},-1);
					\coordinate (c) at ({2/sqrt(3)},0);
					\coordinate (M) at (0.5,0.5);
					\coordinate (M2) at (-0.5,0.5);
					\coordinate (M3) at (-0.5,-0.5);
					\coordinate (M4) at (0.5,-0.5);
					\coordinate (X) at (0.5,0);
					\coordinate (X2) at (-0.5,0);

					\draw[->,opacity=0.8] (0,0) -- (0.8,0);
					\draw[->,opacity=0.8] (0,0) -- (0,0.8);
					\draw[fill] (M) circle(1pt) node[yshift=8pt, xshift=-2pt]{M}; 	
					\draw[fill] (0,0) circle(1pt) node[left]{$\Gamma$}; 
					\draw[fill] (X) circle(1pt) node[below right]{X}; 

					\draw[thick, postaction={decorate}, decoration={markings, mark=at position 0.1 with {\arrow{>}}, markings, mark=at position 0.3 with {\arrow{>}}, markings,  mark=at position 0.4 with {\arrow{>}}, markings, mark=at position 0.6 with {\arrow{>}}, markings, mark=at position 0.7 with {\arrow{>}}, markings, mark=at position 0.9 with {\arrow{>}}}, color=red]
					(0,0) -- (M) -- (X) -- (0,0);
					\draw[opacity=0.8] (M) -- (M2) -- (M3) -- (M4) -- cycle; 
				\end{tikzpicture}
				\vspace{15pt}
				\caption{Brillouin zone and the symmetry points $\Gamma$, $\mathrm{X}$ and $\mathrm{M}$.}
			\end{subfigure}
			\caption{Illustration of the square lattice and the corresponding Brillouin zone. The red path shows the points where the band functions are computed.} \label{fig:square}
\end{figure}
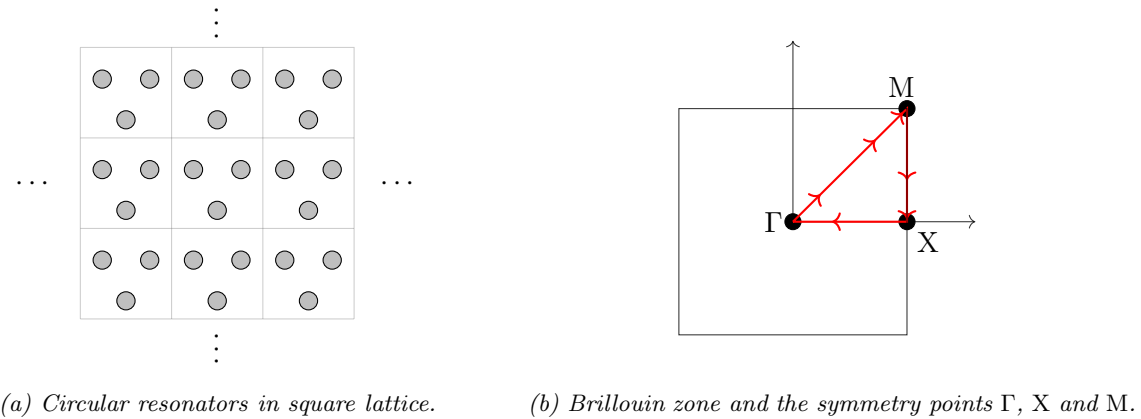
In Figure \ref{fig5}, we compute the band structure with modulation frequency $\Omega = 0.2$ and show the existence of a k-gap. 

\begin{figure}[H]
	\begin{subfigure}[b]{0.48\linewidth}
		\vspace{0pt}
		\begin{center}
			\includegraphics[width=1\linewidth]{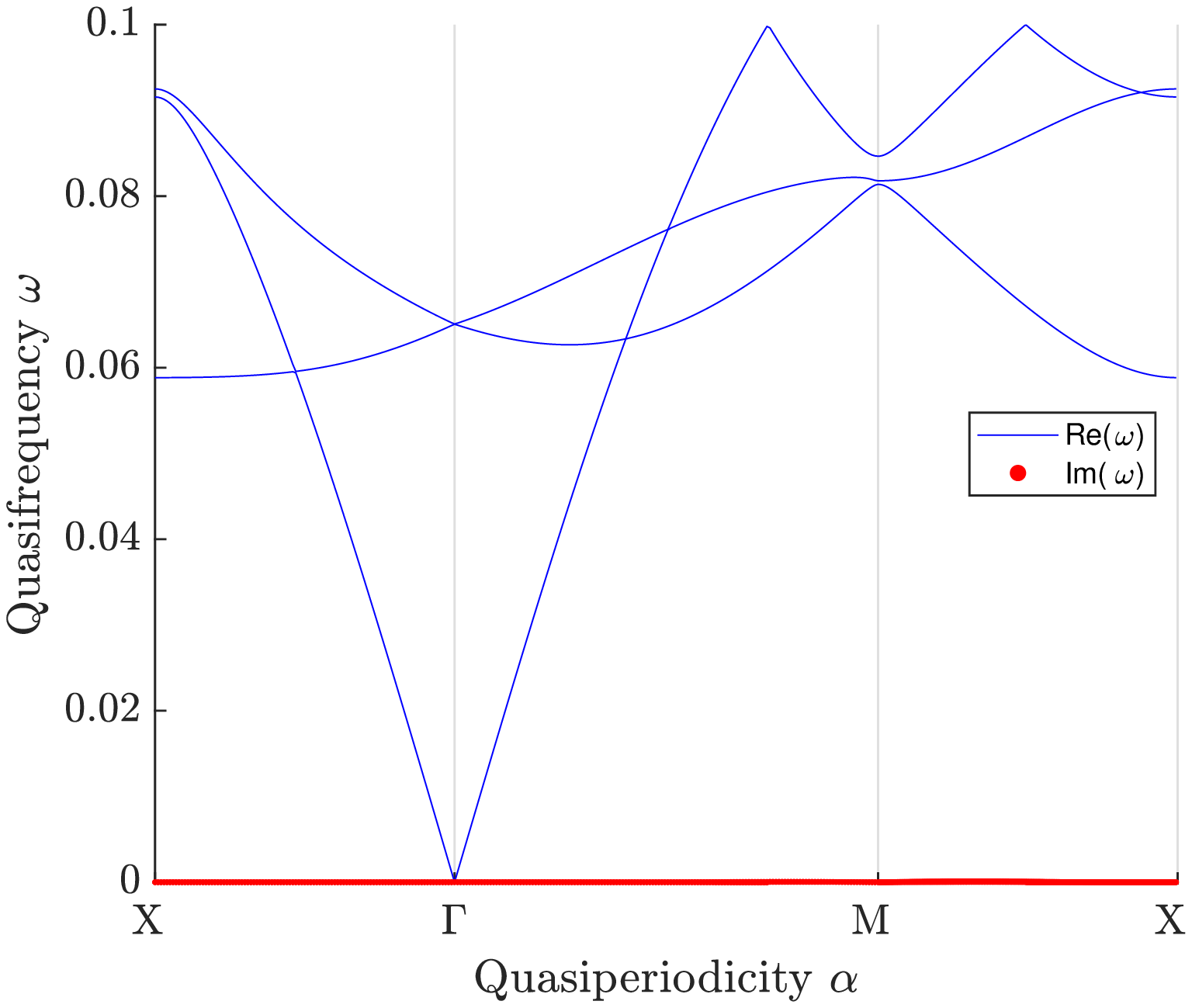}
		\end{center}
		\caption{ \centering
			Static case ($\varepsilon=0$).}
	\end{subfigure}
	\hspace{10pt}
	\begin{subfigure}[b]{0.48\linewidth}
			\vspace{0pt}
		\begin{center}
			\includegraphics[width=1\linewidth]{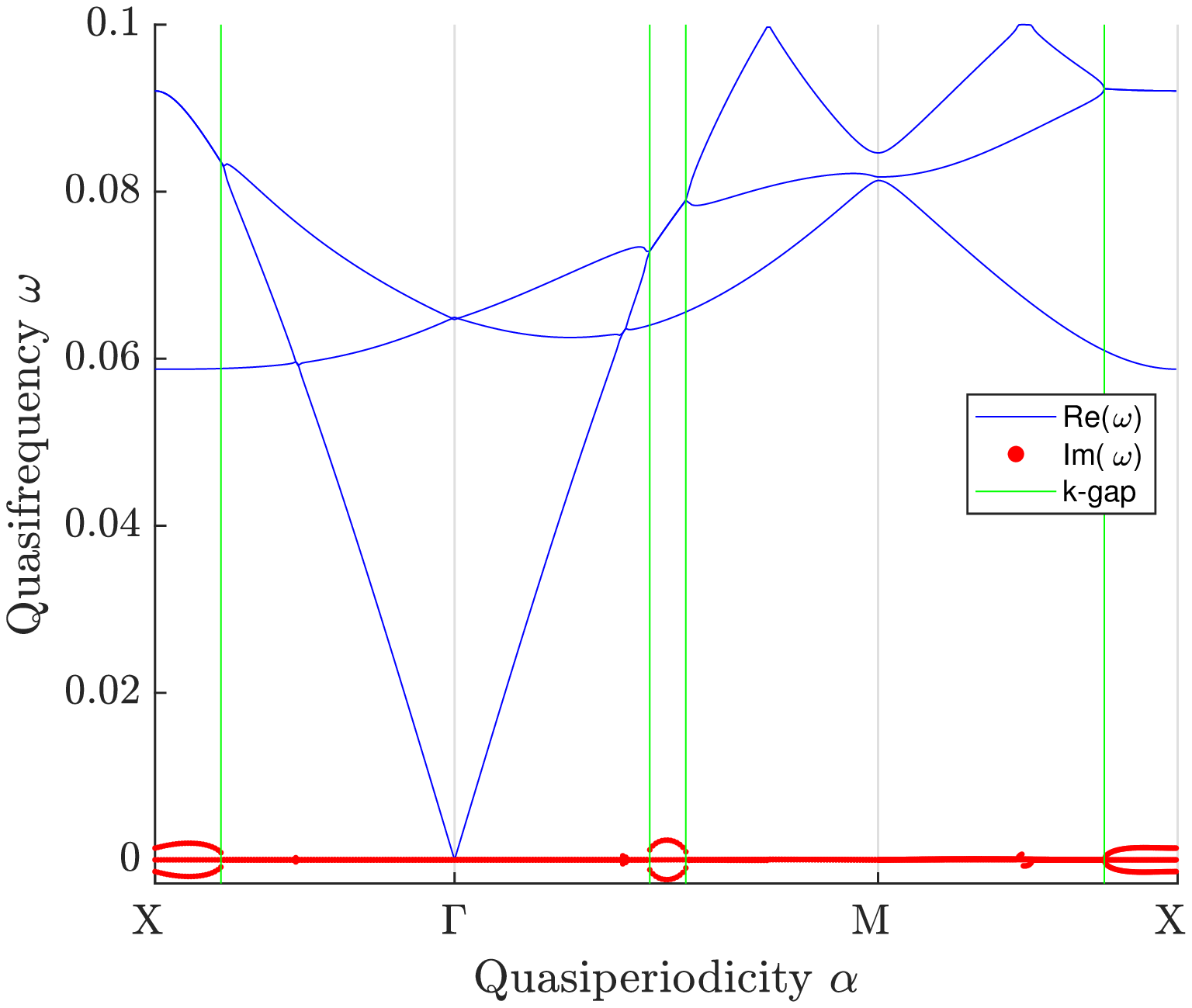}
		\end{center}
		\caption{\centering Modulated case with $\varepsilon=0.1$.}
	\end{subfigure}
	
	\hspace{10pt}
	\caption{Band structure of a square lattice with three subwavelength resonators with $\kappa$ modulation frequency $\Omega = 0.2$.} \label{fig5}
\end{figure}
\subsection{Honeycomb lattice}
First, we consider a honeycomb lattice of resonator trimers as illustrated in Figure \ref{fig:honeycomb}, where the unit cell now contains six resonators $D_i$ respectively centred at $c_i$, $i=1,..,6$:
\begin{align*}
	c_1 &= (1,0) + 3R(1,0), \quad c_2 = (1,0) + 3R\left(\cos\left(\tfrac{2\pi}{3}\right),\sin\left(\tfrac{2\pi}{3}\right)\right),   &c_3 = (1,0) + 3R\left(\cos\left(\tfrac{4\pi}{3}\right),\sin\left(\tfrac{4\pi}{3}\right)\right),\\[0.5em]
	c_4 &= (2,0) + 3R\left(\cos\left(\tfrac{\pi}{3}\right),\sin\left(\tfrac{\pi}{3}\right)\right), \qquad c_5 = (2,0) - 3R(1,0), 	 &c_6 = (2,0) + 3R\left(\cos\left(\tfrac{5\pi}{3}\right),\sin\left(\tfrac{5\pi}{3}\right)\right).
\end{align*}
We use the modulation given by $\kappa_i(t) = 1, \ i=1,\ldots,6$ and 
\begin{equation*}\rho_1(t) = \rho_4(t) = \frac{1}{1 + \varepsilon\cos(\Omega t)}, \quad \rho_2(t) = \rho_5(t) = \frac{1}{1 + \varepsilon\cos\left(\Omega t + \frac{2\pi}{3}\right)}, 
\end{equation*}
\begin{equation*}
\quad \rho_3(t) = \rho_6(t) = \frac{1}{1 + \varepsilon\cos\left(\Omega t + \frac{4\pi}{3}\right)},\end{equation*}
for $0 \leq \varepsilon < 1$.

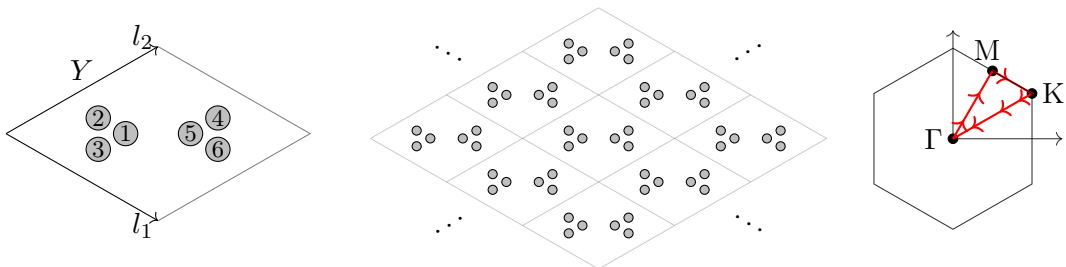
\begin{figure}[H]
	\begin{subfigure}[b]{0.33\linewidth}
		\centering
		\begin{tikzpicture}[scale=2]
				\pgfmathsetmacro{\r}{0.08pt}
				\pgfmathsetmacro{\rt}{0.12pt}
				\coordinate (a) at (1,{1/sqrt(3)});		
				\coordinate (b) at (1,{-1/sqrt(3)});	
				\coordinate (c) at (2,0);
				\coordinate (x1) at ({2/3},0);
				\coordinate (x2) at ({4/3},0);
				\draw[->] (0,0) -- (a) node[pos=0.9,xshift=0,yshift=7]{ $l_2$} node[pos=0.5,above]{$Y$};
				\draw[->] (0,0) -- (b) node[pos=0.9,xshift=0,yshift=-5]{ $l_1$};
				\draw[opacity=0.5] (a) -- (c) -- (b);
				\draw[fill=lightgray] (x1){} +(0:\rt) circle(\r) node{\footnotesize  1};
				\draw[fill=lightgray] (x1){} +(120:\rt) circle(\r)node{\footnotesize  2};
				\draw[fill=lightgray] (x1){} +(240:\rt) circle(\r) node{\footnotesize  3};
				\draw[fill=lightgray] (x2){} +(60:\rt) circle(\r) node{\footnotesize  4};
				\draw[fill=lightgray] (x2){} +(180:\rt) circle(\r) node{\footnotesize  5};
				\draw[fill=lightgray] (x2){} +(300:\rt) circle(\r) node{\footnotesize  6};
		\end{tikzpicture}
		\vspace{0.3cm}
		\caption{Hexagonal lattice unit cell $Y$ containing $6$ resonators.}
	\end{subfigure}
	\begin{subfigure}[b]{0.33\linewidth}
		\begin{tikzpicture}[scale=1]
			\begin{scope}[xshift=-5cm,scale=1]
				\pgfmathsetmacro{\r}{0.06pt}
				\pgfmathsetmacro{\rt}{0.12pt}
				\coordinate (a) at (1,{1/sqrt(3)});		
				\coordinate (b) at (1,{-1/sqrt(3)});
				
				\draw[opacity=0.2] (0,0) -- (a);
				\draw[opacity=0.2] (0,0) -- (b);
				\draw[fill=lightgray] ({2/3},0){} +(0:\rt) circle(\r);
				\draw[fill=lightgray] ({2/3},0){} +(120:\rt) circle(\r);
				\draw[fill=lightgray] ({2/3},0){} +(240:\rt) circle(\r);
				\draw[fill=lightgray] ({4/3},0){} +(60:\rt) circle(\r);
				\draw[fill=lightgray] ({4/3},0){} +(180:\rt) circle(\r);
				\draw[fill=lightgray] ({4/3},0){} +(300:\rt) circle(\r);
				
				\begin{scope}[shift = (a)]					
					\draw[opacity=0.2] (0,0) -- (1,{1/sqrt(3)});
					\draw[opacity=0.2] (0,0) -- (1,{-1/sqrt(3)});
					\draw[opacity=0.2] (1,{1/sqrt(3)}) -- (2,0);
					\draw[fill=lightgray] ({2/3},0){} +(0:\rt) circle(\r);
					\draw[fill=lightgray] ({2/3},0){} +(120:\rt) circle(\r);
					\draw[fill=lightgray] ({2/3},0){} +(240:\rt) circle(\r);
					\draw[fill=lightgray] ({4/3},0){} +(60:\rt) circle(\r);
					\draw[fill=lightgray] ({4/3},0){} +(180:\rt) circle(\r);
					\draw[fill=lightgray] ({4/3},0){} +(300:\rt) circle(\r);
				\end{scope}
				\begin{scope}[shift = (b)]
					
					\draw[opacity=0.2] (0,0) -- (1,{1/sqrt(3)});
					\draw[opacity=0.2] (0,0) -- (1,{-1/sqrt(3)});
					\draw[opacity=0.2] (2,0) -- (1,{-1/sqrt(3)});
					\draw[fill=lightgray] ({2/3},0){} +(0:\rt) circle(\r);
					\draw[fill=lightgray] ({2/3},0){} +(120:\rt) circle(\r);
					\draw[fill=lightgray] ({2/3},0){} +(240:\rt) circle(\r);
					\draw[fill=lightgray] ({4/3},0){} +(60:\rt) circle(\r);
					\draw[fill=lightgray] ({4/3},0){} +(180:\rt) circle(\r);
					\draw[fill=lightgray] ({4/3},0){} +(300:\rt) circle(\r);
				\end{scope}
				\begin{scope}[shift = ($-1*(a)$)]
					
					\draw[opacity=0.2] (0,0) -- (1,{1/sqrt(3)});
					\draw[opacity=0.2] (0,0) -- (1,{-1/sqrt(3)});
					\draw[fill=lightgray] ({2/3},0){} +(0:\rt) circle(\r);
					\draw[fill=lightgray] ({2/3},0){} +(120:\rt) circle(\r);
					\draw[fill=lightgray] ({2/3},0){} +(240:\rt) circle(\r);
					\draw[fill=lightgray] ({4/3},0){} +(60:\rt) circle(\r);
					\draw[fill=lightgray] ({4/3},0){} +(180:\rt) circle(\r);
					\draw[fill=lightgray] ({4/3},0){} +(300:\rt) circle(\r);
				\end{scope}
				\begin{scope}[shift = ($-1*(b)$)]
					
					\draw[opacity=0.2] (0,0) -- (1,{1/sqrt(3)});
					\draw[opacity=0.2] (0,0) -- (1,{-1/sqrt(3)});
					\draw[fill=lightgray] ({2/3},0){} +(0:\rt) circle(\r);
					\draw[fill=lightgray] ({2/3},0){} +(120:\rt) circle(\r);
					\draw[fill=lightgray] ({2/3},0){} +(240:\rt) circle(\r);
					\draw[fill=lightgray] ({4/3},0){} +(60:\rt) circle(\r);
					\draw[fill=lightgray] ({4/3},0){} +(180:\rt) circle(\r);
					\draw[fill=lightgray] ({4/3},0){} +(300:\rt) circle(\r);
				\end{scope}
				\begin{scope}[shift = ($(a)+(b)$)]
					
					\draw[opacity=0.2] (0,0) -- (1,{1/sqrt(3)});
					\draw[opacity=0.2] (0,0) -- (1,{-1/sqrt(3)});
					\draw[opacity=0.2] (1,{1/sqrt(3)}) -- (2,0) -- (1,{-1/sqrt(3)});
					\draw[fill=lightgray] ({2/3},0){} +(0:\rt) circle(\r);
					\draw[fill=lightgray] ({2/3},0){} +(120:\rt) circle(\r);
					\draw[fill=lightgray] ({2/3},0){} +(240:\rt) circle(\r);
					\draw[fill=lightgray] ({4/3},0){} +(60:\rt) circle(\r);
					\draw[fill=lightgray] ({4/3},0){} +(180:\rt) circle(\r);
					\draw[fill=lightgray] ({4/3},0){} +(300:\rt) circle(\r);
				\end{scope}
				\begin{scope}[shift = ($-1*(a)-(b)$)]
					
					\draw[opacity=0.2] (0,0) -- (1,{1/sqrt(3)});
					\draw[opacity=0.2] (0,0) -- (1,{-1/sqrt(3)});
					\draw[fill=lightgray] ({2/3},0){} +(0:\rt) circle(\r);
					\draw[fill=lightgray] ({2/3},0){} +(120:\rt) circle(\r);
					\draw[fill=lightgray] ({2/3},0){} +(240:\rt) circle(\r);
					\draw[fill=lightgray] ({4/3},0){} +(60:\rt) circle(\r);
					\draw[fill=lightgray] ({4/3},0){} +(180:\rt) circle(\r);
					\draw[fill=lightgray] ({4/3},0){} +(300:\rt) circle(\r);
				\end{scope}
				\begin{scope}[shift = ($(a)-(b)$)]
					
					\draw[opacity=0.2] (0,0) -- (1,{1/sqrt(3)});
					\draw[opacity=0.2] (0,0) -- (1,{-1/sqrt(3)});
					\draw[opacity=0.2] (1,{1/sqrt(3)}) -- (2,0);
					\draw[fill=lightgray] ({2/3},0){} +(0:\rt) circle(\r);
					\draw[fill=lightgray] ({2/3},0){} +(120:\rt) circle(\r);
					\draw[fill=lightgray] ({2/3},0){} +(240:\rt) circle(\r);
					\draw[fill=lightgray] ({4/3},0){} +(60:\rt) circle(\r);
					\draw[fill=lightgray] ({4/3},0){} +(180:\rt) circle(\r);
					\draw[fill=lightgray] ({4/3},0){} +(300:\rt) circle(\r);
				\end{scope}
				\begin{scope}[shift = ($-1*(a)+(b)$)]					
					\draw[opacity=0.2] (0,0) -- (1,{1/sqrt(3)});
					\draw[opacity=0.2] (0,0) -- (1,{-1/sqrt(3)});
					\draw[opacity=0.2] (2,0) -- (1,{-1/sqrt(3)});
					\draw[fill=lightgray] ({2/3},0){} +(0:\rt) circle(\r);
					\draw[fill=lightgray] ({2/3},0){} +(120:\rt) circle(\r);
					\draw[fill=lightgray] ({2/3},0){} +(240:\rt) circle(\r);
					\draw[fill=lightgray] ({4/3},0){} +(60:\rt) circle(\r);
					\draw[fill=lightgray] ({4/3},0){} +(180:\rt) circle(\r);
					\draw[fill=lightgray] ({4/3},0){} +(300:\rt) circle(\r);
				\end{scope}
				\begin{scope}[shift = ($2*(a)$)]
					\draw (1,0) node[rotate=30]{$\cdots$};
				\end{scope}
				\begin{scope}[shift = ($-2*(a)$)]
					\draw (1,0) node[rotate=210]{$\cdots$};
				\end{scope}
				\begin{scope}[shift = ($2*(b)$)]
					\draw (1,0) node[rotate=-30]{$\cdots$};
				\end{scope}
				\begin{scope}[shift = ($-2*(b)$)]
					\draw (1,0) node[rotate=150]{$\cdots$};
				\end{scope}
			\end{scope}
		\end{tikzpicture}
		
		\caption{Periodic system with trimers in a honeycomb lattice.}
	\end{subfigure}
	\begin{subfigure}[b]{0.33\linewidth}
		\centering
			\begin{tikzpicture}[scale=1.8]	
				\coordinate (a) at ({1/sqrt(3)},1);		
				\coordinate (b) at ({1/sqrt(3)},-1);
				\coordinate (c) at ({2/sqrt(3)},0);
				\coordinate (M) at ({0.5/sqrt(3)},0.5);
				\coordinate (M2) at ({-0.5/sqrt(3)},-0.5);
				\coordinate (Km) at ({-1/sqrt(3)},{-1/3});
				\coordinate (K1) at ({1/sqrt(3)},{1/3});
				\coordinate (K2) at ({1/sqrt(3)},{-1/3});
				\coordinate (K3) at (0,{-2/3});
				\coordinate (K4) at ({-1/sqrt(3)},{-1/3});
				\coordinate (K5) at ({-1/sqrt(3)},{1/3});
				\coordinate (K6) at (0,{2/3});
				
				\draw[->,opacity=0.8] (0,0) -- (0.8,0);
				\draw[->,opacity=0.8] (0,0) -- (0,0.8);
				\draw[fill] (M) circle(1pt) node[yshift=8pt, xshift=-2pt]{M}; 
				\draw[fill] (0,0) circle(1pt) node[left]{$\Gamma$}; 
				\draw[fill] (K1) circle(1pt) node[right]{K};
				\draw[thick,postaction={decorate}, decoration={markings, 
				 mark=at position 0.1 with {\arrow{>}}, markings,mark=at position 0.25 with {\arrow{>}},markings, mark=at position 0.45 with {\arrow{>}},markings,  mark=at position 0.65 with {\arrow{>}},markings, mark=at position 0.75 with {\arrow{>}}, markings, mark=at position 0.9 with {\arrow{>}}}, color=red]
				(0,0) -- (M) -- (K1) -- (0,0);
	
				\draw[opacity=0.8] (K1) -- (K2) -- (K3) -- (K4) -- (K5) -- (K6) -- cycle; 
			\end{tikzpicture}
		\vspace{15pt}		
		\caption{Brillouin zone and the symmetry points $\Gamma$, $\mathrm{K}$ and $\mathrm{M}$.}
	\end{subfigure}
	\caption{Illustration of the honeycomb lattice and corresponding Brillouin zone. The red path shows the points where the band functions are computed.} \label{fig:honeycomb}
\end{figure}

\begin{figure}[tbh]

	\begin{subfigure}[b]{0.45\linewidth}
			\vspace{0pt}
		\begin{center}
\includegraphics[width=1\linewidth]{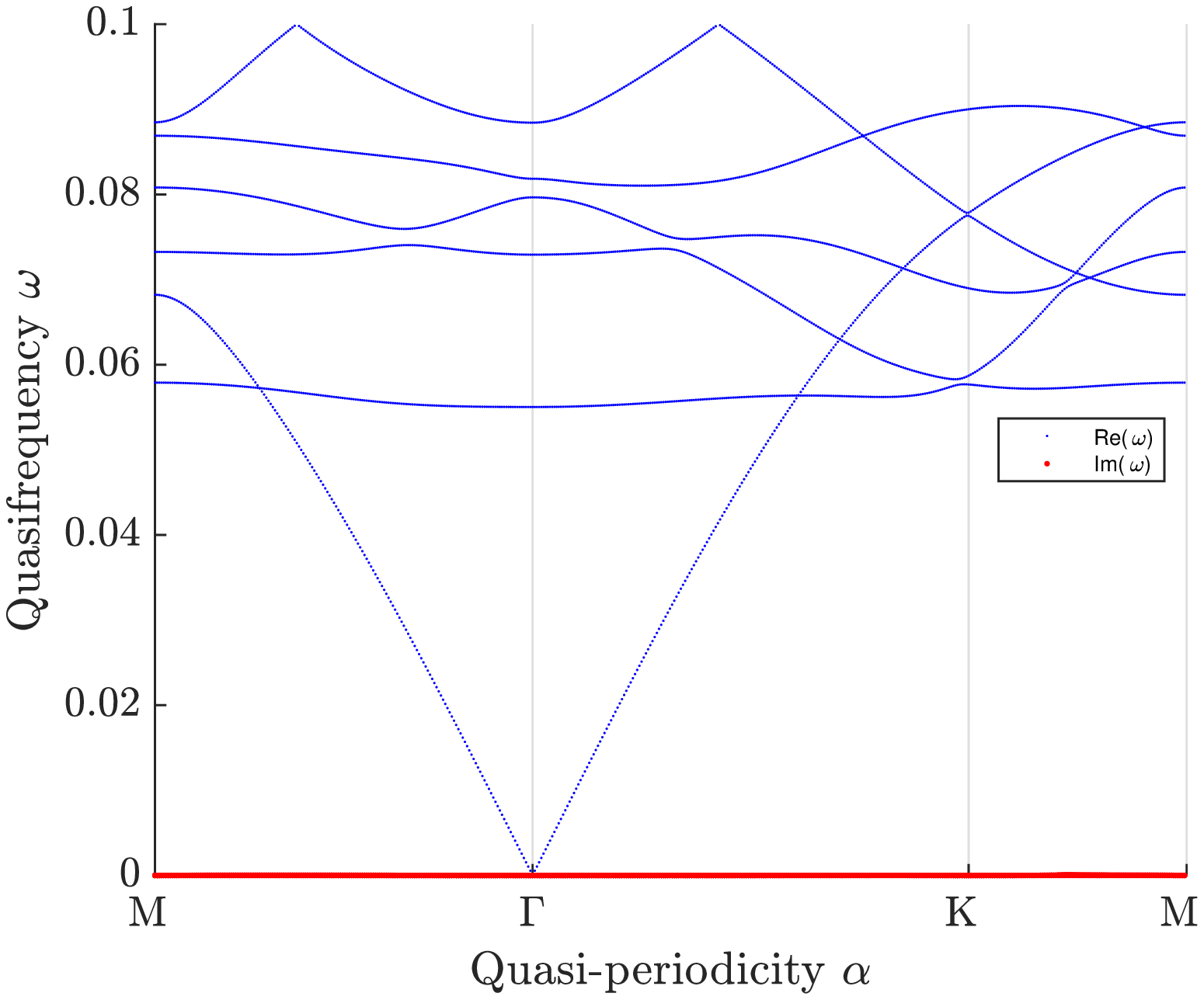}
		\end{center}
		\caption{\centering Static case.}
	\end{subfigure}
	\hspace{0.5cm}
	\centering
	\begin{subfigure}[b]{0.45\linewidth}
			\vspace{0pt}
		\begin{center}
 \includegraphics[width=1\linewidth]{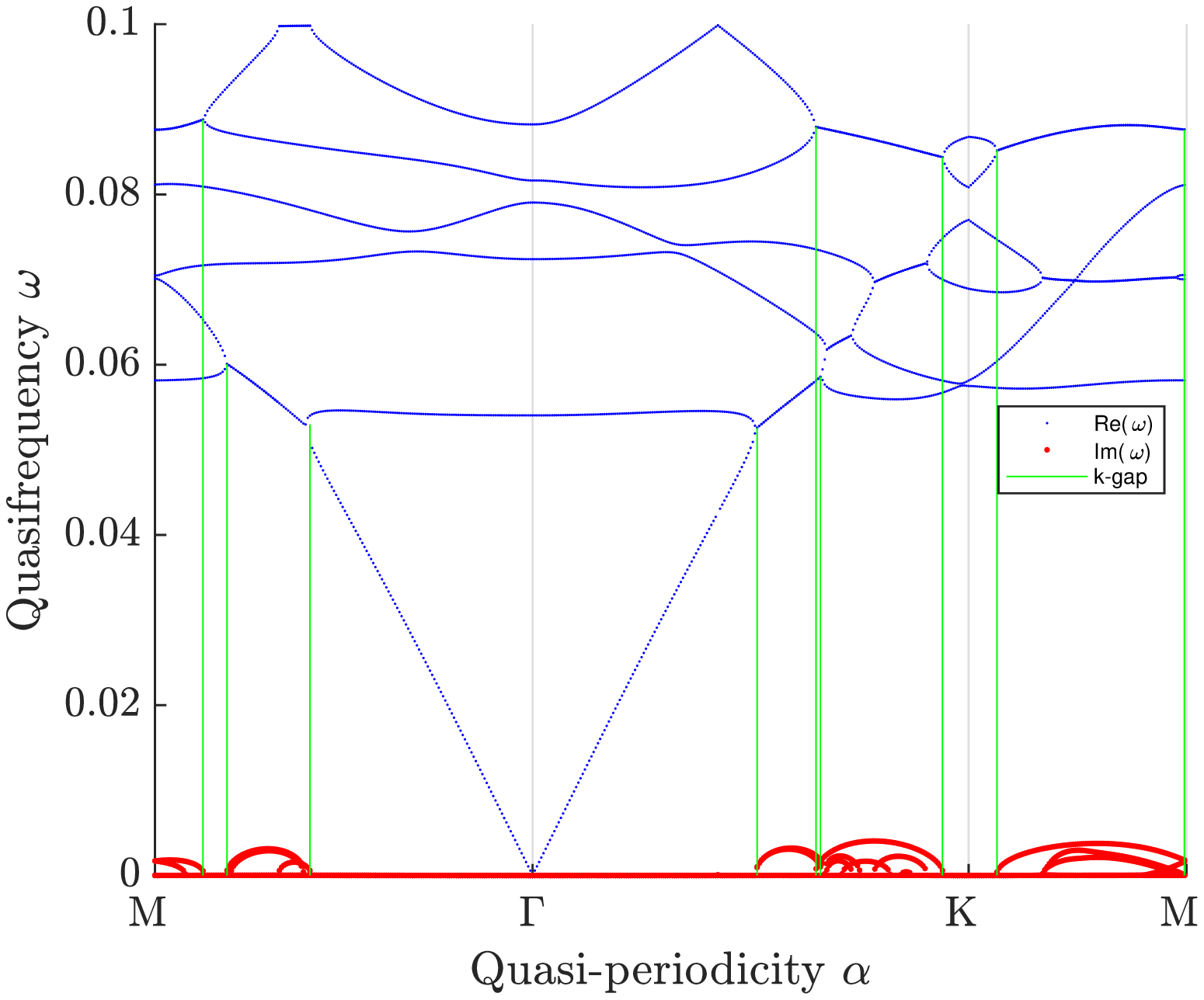}
		\end{center}
		\caption{\centering Modulated case with $\varepsilon=0.2$.}
	\end{subfigure}
	\hspace{10pt}
	\caption{Band structure of a honeycomb lattice with six subwavelength resonators with $\kappa$ modulation frequency $\Omega = 0.2$.} \label{fig7}
\end{figure}

The band functions are given in Figure \ref{fig7}, which illustrates the existence of k-gaps in a honeycomb crystal of subwavelength resonators. We observe in all structures above that the intervals of degeneracy of the band functions correspond to the k-gaps. 

\appendix
\section{Eigenvalue perturbation theory}
Consider (\ref{eq:1dsys}) and assume that $F=F_0+\varepsilon F_1+\varepsilon^2 F_2 + O(\varepsilon^3 )$, and $F_0$ is diagonal with respect to the basis vectors $w_1,\ldots,w_n$.  We would like to expand the eigenvalues of $F$ in terms of $\varepsilon$. As said before, this is a typical problem in perturbative quantum theory \cite{perturbation}. Similar formulas in quantum mechanical perturbation theory can be found in textbooks such as \cite{qmimperial}. The following derivation is reformulated to suit our setting.

We will focus on the perturbation of degenerate points. Let $f_0$ be a degenerate eigenvalue of $F_0$ of multiplicity $r$ and let  $w_1,\ldots, w_r$ be its associated eigenvectors. Without loss of generality, we assume that $(F_0)_{ii} = f_0$ for $i=1,\ldots,r$. We define the projection operator
\begin{equation}
	P:=\begin{pmatrix}
		\mathrm{Id}_r & \\ & 0
	\end{pmatrix}  \quad \text{and let } \ Q:=\mathrm{Id}_N -P.
\end{equation}
Here, $\mathrm{Id}_r$ is the $r\times r$ identity matrix. 

We remark that $F_0$ commutes with $P$ and $Q$. Now, we fix an eigenvector $v_0\in\text{span}\{w_1,\ldots,w_n\}$ and expand $v$ and $f$ as follows:
\begin{equation}
	\begin{split}
		v&=v_0+\varepsilon v_1+\varepsilon^2 v_2+O(\varepsilon^3),\\
		f&=f_0+\varepsilon f_1+\varepsilon^2 f_2+O(\varepsilon^3).
	\end{split}
\end{equation}
We require first that $v_0 = P(v)$, due to the normalization of $v$. From $Fv=fv$,  it follows that up to $O(\varepsilon^2)$ 
\begin{equation}
	\label{Qequation}
	\begin{split}
		& F_0v+\varepsilon(F_1+\varepsilon F_2)v=fv,\\
		& QF_0v+\varepsilon QVv=fQv,\\
		& Q(f \mathrm{Id}_N -F_0)v=\varepsilon QVv \text{ and } \\
		& Qv =\varepsilon ((f \, \mathrm{Id}_N -F_0)^{-1}Q)Vv,\end{split}	
\end{equation}
where $V:= F_1 + \varepsilon F_2$.

Note that we should treat $((f \, \mathrm{Id}_N -F_0)^{-1}Q)$ as $0|_{E_{f_0}}\oplus ((f \, \mathrm{Id}_N -F_0)^{-1}Q)|_{E_{f_0}^c}$, where ${E_{f_0}}$ denotes the eigenspace associated with $f_0$ and $E_{f_0}^c$ its complementary. We get
\begin{equation}
	PF_0v+\varepsilon PVv = fPv,
\end{equation}
and therefore,
\begin{equation}
	\label{Pequation}
	f_0Pv+\varepsilon PVv = fPv, 
\end{equation}
where we used the fact that $PF_0v=F_0Pv=f_0Pv$. Now, inserting $v=Pv+Qv$ into the second term of the left-hand side of (\ref{Pequation}) yields $f_0Pv+\varepsilon PV(Pv+Qv)=fPv$ and hence
\begin{equation}
	\label{master}
	\begin{split}
		& f_0Pv+\varepsilon PVPv+\varepsilon PVQv=fPv\text{ and }\\
		& f_0Pv+ \varepsilon PVPv+ \varepsilon^2PV\left((f \, \mathrm{Id}_N -F_0)^{-1}Q\right)Vv=fPv.
	\end{split}
\end{equation}
For the $\varepsilon^2$-term, we evaluate the expression at $\varepsilon=0$:
\begin{equation}
	\label{hocus}
	PV((f\, \mathrm{Id}_N -F_0)^{-1}Q)Vv|_{\varepsilon=0}=PF_1\left((f_0 \, \mathrm{Id}_N -F_0)^{-1}Q\right)F_1v_0:=PF_1GF_1v_0,
\end{equation}
where $G:=(f_0 \, \mathrm{Id}_N -F_0)^{-1}Q=\text{diag}(0,..,0,(f_0-\lambda_2)^{-1},\ldots,(f_0-\lambda_k)^{-1})$ if we assume that $F_0=\text{diag}(f_0,\ldots,f_0,\lambda_2,\ldots,\lambda_k)$. Hence, we can write that
\begin{equation}
	\label{effectiveH}	
	\underbrace{P\left(F_0 \, \mathrm{Id}_N +\varepsilon (F_1+\varepsilon F_2)+\varepsilon^2(PF_1GF_1)\right)P}_{:=\mathcal{H}}v_0=fv_0.
\end{equation}
We define the \textit{effective Hamiltonian} as
\begin{equation}
	\label{effective}
	\mathcal{H}:=PF_0P+\varepsilon PF_1P +\varepsilon^2P(F_1GF_1+F_2)P.
\end{equation}
\noindent
Hence if $f_0$ is a degenerate point of multiplicity $r$, there will be $r$ corresponding values of $f_1$ which are the non-zero eigenvalues of the matrix $PF_1P$.

\bibliographystyle{abbrv}
\bibliography{paper_kappa}{}
\end{document}